\DeclareMathAlphabet{\mathpzc}{OT1}{pzc}{m}{it}
\newtheorem{Thm}{Theorem}[section]
\newtheorem{Cor}[Thm]{Corollary}
\newtheorem{Lem}[Thm]{Lemma}
\newtheorem{Conj}[Thm]{Conjecture}
\theoremstyle{definition}
\newtheorem{Rem}[Thm]{Remark}
\theoremstyle{definition}
\theoremstyle{definition}
\newtheorem{Def}[Thm]{Definition}
\theoremstyle{definition} 
\newcommand{\N}{\mathbb{N}}
\newcommand{\R}{\mathbb{R}}
\newcommand{\Z}{\mathbb{Z}}
\newcommand{\mb}{\mathbf}
\newcommand{\mbb}{\mathbb}
\newcommand{\mc}{\mathcal}
\newcommand{\mi}{\mathit}
\newcommand{\mr}{\mathrm}
\newcommand{\mscr}{\mathscr}
\newcommand{\GG}{\mb{G}}
\newcommand{\lra}{\longrightarrow}
\newcommand{\wrt}{with respect to }
\newcommand{\I}{\mc{I}}
\begin{document}

\title{Algebraicity of Nash sets and of their \\ asymmetric cobordism}

\author{Riccardo Ghiloni
and 
Alessandro Tancredi}

\date{}

\maketitle


\begin{abstract}
This paper deals with the existence of algebraic structures on compact Nash sets. We introduce the algebraic-topological notion of asymmetric Nash cobordism between compact Nash sets, and we prove that a compact Nash set is semialgebraically homeomorphic to a real algebraic set if and only if it is asymmetric Nash cobordant to a point or, equivalently, if it is strongly asymmetric Nash cobordant to a real algebraic set. As a conse\-quence, we obtain new large classes of compact Nash sets semialgebraically homeomorphic to real algebraic sets. To prove our results, we need to develop new algebraic-topological approximation procedures. We conje\-cture that every compact Nash set is asymmetric Nash cobordant to a point, and hence semialge\-braically homeomorphic to a real algebraic~set.

\ 

\begin{footnotesize}
\noindent
\emph{2010 Mathematics Subject Classification}: Primary 14P20; Secondary 14P25, 14P15
 \newline
\noindent
\emph{Keywords}: Nash sets, algebraic models, cobordism, topology of real algebraic sets, semialgebraic sets
\end{footnotesize}
\end{abstract}


\section{The algebraization problem: state of the art}

The problem of the exi\-stence of algebraic structures on topological spaces is an old and deep question of real algebraic geometry.

The nonsingular case has been completely settled. In 1952, Nash \cite{Na} showed that eve\-ry compact smooth manifold $M$ is diffeomorphic to a union of nonsingular connected components of a real algebraic set. The algebraization problem for $M$ arose: Is $M$ diffeomorphic to a whole nonsingular real algebraic set? Five years later, Wallace \cite{wa} proved that the answer is affirmative if $M$ is the boundary of a compact smooth manifold-with-boundary. This insight of using cobordism was of crucial importance. In fact, in 1965, Milnor \cite{Mi} showed that every compact smooth manifold is unoriented cobordant to a nonsingular real algebraic set and later, in 1973, Tognoli \cite{To} used this result to prove that the answer to the preceding question is always affirmative: every compact smooth mani\-fold has an algebraic model; namely, it is diffeomorphic to a nonsingular real algebraic set.

Cobordism can be employed also to make algebraic topological spaces having singularities. This was done by Akbulut and King in their theory of reso\-lution towers (see \cite{AK}). Their idea is to consider compact topological spaces $M$ whose singularities can be topologically resolved in the following way: there exist finite families of compact smooth manifolds $\{M_i\}_i$ and of smooth maps $\{f_{ij}:M_{ij} \lra M_j\}_{i,j}$ from subsets $M_{ij}$ of $M_i$ to $M_j$ such that the quotient space obtained by gluing the $M_i$'s along the $f_{ij}$'s is homeo\-morphic to $M$. The $M_{ij}$'s are finite unions of smooth hypersurfaces of $M_i$ in general position. Now, one can topologically identify $M$ with the pair $\mc{I}=(\{M_i\}_i,\{f_{ij}\}_{i,j})$, which is said to be a resolution tower for $M$. Observe that Hironaka's desingularization theo\-rem \cite{Hi} ensures the existence of a resolution tower for every compact real algebraic set. The resolution tower $\mc{I}$ for $M$ is said to be a ``weak boundary'' if there exists another resolution tower $(\{M'_i\}_i,\{f'_{ij}:M'_{ij} \lra M'_j\}_{i,j})$ such that, for every $i,j$, $M'_i$ is a compact smooth manifold-with-boundary, $\partial M'_i=M_i$, $M'_{ij}$ is a finite union of smooth hypersurfaces of $M'_i$ in general position, $M_{ij}=M'_{ij} \cap M_i$ and $f_{ij}=f'_{ij}|_{M_{ij}}$. Thanks to this notion, Akbulut and King obtained a complete topological characterization of real algebraic sets with isolated singularities (see \cite{akis}) and, with the help of L. Taylor, they proved the existence of algebraic structures on every compact PL manifolds (see \cite{AK81,AT}). Furthermore, they found local topological necessary conditions for a compact polyhedron $P$ of dimension $\leq 3$ to be homeomorphic to a real algebraic set. In dimension $\leq 2$, these conditions reduce to the Sullivan condition: the link of each vertex of $P$ has even Euler chara\-cteristic. In dimension $3$, the mentioned conditions define five independent local topological obstru\-ctions for the algebraicity of $P$, including the Sullivan one. By employing the concept of ``weak boundary'', Akbulut and King proved that these local topological obstructions are the unique obstru\-ctions for a compact polyhedron of dimension $\leq 3$ to be homeomorphic to a real algebraic set. As a significant corollary, we have that every compact Nash set (or better every compact real analytic set) of dimension $\leq 3$ has a real algebraic structure. We refer the reader to \cite{BD,AK3} for the $2$-dimensional case and to \cite{AK} for the $3$-dimensional one. 

In \cite{McP1,McP2}, by means of a completely different method, McCrory and Paru\-si\'{n}ski di\-scovered local topological necessary conditions for the algebraicity of compact polyhedra in any dimension. These conditions coincide with the ones of Akbulut and King in dimension $\leq 3$. In higher dimension, the number of independent local topological obstructions defined by such necessary conditions is enormous, at least $2^{43}-43$.

The latter result has an important consequence: in arbitrary dimension, there is no hope to give a reasonable local topological description of compact polyhedra admitting a real algebraic structure. However, if compact polyhedra we want make algebraic have a Nash structure, then they satisfy all the McCrory-Parusi\'{n}ski conditions (see \cite{No,gh07}). Furthermore, such polyhedra admit resolution of singulari\-ties via Hironaka's desingularization theorem, and hence the method of resolution towers applies. In this way, at the present time, the following seems to be the most promising and meaningful formulation of the algebraization problem:

\vspace{1em}

\noindent \textsl{Algebraization problem:} \textit{Is a Nash set semialgebraically homeo\-morphic to a real algebraic set?}

\vspace{1em}

A similar question can be restated in the real analytic setting.

Here we are interested in the compact Nash case only.

It is important to remark that the theory of resolution towers furnishes a na\-tural and intriguing ``three-steps'' strategy to tackle the preceding problem for an arbitrary compact Nash set $M$: (I) resolve the singularities of $M$ obtaining a resolution tower $\mc{I}$; (II) make algebraic $\I$ via algebraic approximation techni\-ques, obtaining an algebraic resolution tower~$\mc{I}'$; (III) blow down $\mc{I}'$ obtaining a real algebraic set semialgebraically homeomorphic to~$M$.

As we have just recalled, the existence of $\I$ in step I is ensured by Hirona\-ka's desingularization theorem. The first part of step III can be performed as well: one can show that the blowing down of every algebraic resolution tower is semialgebraically homeomorphic to a real algebraic set. A serious difficulty appears in step~II. In fact, the blowing down of a (generic) resolution tower is topologi\-cally unstable with respect to the usual algebraic approximation techniques. Hence one cannot conclude that the blowing down of $\I'$ is semialgebraically homeomorphic to $M$. For this reason, one needs to require that the reso\-lution tower $\I$ of step I is of a very special type. Unfortunately, as one reads at page 173 of the 1992 book \cite{AK} (for the algebraic case), in order to obtain such a type of resolution tower, it seems to be necessary an ``as yet unproven reso\-lution of singularities theorem'' for Nash maps. This assertion is still true nowadays. The preceding consi\-derations fully describe the intrinsic difficulty to treat the \textsl{algebraization problem} by the method of resolution towers.

Recently, by using different methods, some examples of compact Nash sets admitting real algebraic structures have been found in arbitrary dimension and with non-isolated singularities. The examples we refer to are the following two:
\begin{itemize}
 \item[$(\mr{Ex}1)$] The product of a standard sphere and of a compact Nash set is Nash isomorphic to a real algebraic set (see \cite{Ta-To}).
 \item[$(\mr{Ex}2)$] Let $M \subset \R^n$ be a compact Nash set symmetric \wrt a point $p \in \R^n$. Then $M$ is Nash isomorphic to a real algebraic set if $p \not\in M$ or if $p$ is an isolated point of $M$, and it is semialgebraically homeomorphic to a real algebraic set if $p$ is a non-isolated point of $M$. Under suitably conditions, these results extend to the case in which $M$ is symmetric \wrt an affine subspace of $\R^n$ of positive dimension (see \cite{GT2,GT1}).
\end{itemize}

To the best of our knowledge, in the singular setting, until now, the \textsl{algebraization problem} has been solved only for the compact Nash sets described above; that is, the compact Nash sets of dimension $\leq 3$ or with isolated singularities, or the ones mentioned in the preceding two examples. 

On the contrary, the study of the algebraization problem for manifolds has been deepened in several directions. In \cite{Ba,BK}, it is proved that, for every compact smooth manifold $M$ of positive dimension, the set of birationally noniso\-morphic algebraic models of $M$ has the power of continuum. In \cite{BG}, Ballico~and the first author improved this result by showing that the algebraic structure~of every nonsingular real algebraic set of positive dimension can be deformed by an arbitrarily large number of effective parameters (see \cite{BG2,Gh} for the~singular~case).

The existence of several distinct algebraic models of $M$ poses the question of constructing algebraic models of $M$ with additional algebraic-geometric pro\-perties or algebraic models of $M$ on which certain smooth objects attached to $M$ become algebraic. There is a wide literature devoted to this topic. For some of the main developments of the algebraization problem for manifolds, we refer the reader to two recent papers, and to the numerous refe\-rences mentioned therein: see \cite{k11}, especially the first section, for the  algebraization of manifold pairs, of vector bundles, of homology and cohomology classes, of smooth submanifolds of euclidean spaces via ambient isotopies, and of analytic hypersurfaces with isolated singularities via ambient isotopies; see \cite{ma14} for the Nash rationality conjecture concerning the existence of rational algebraic models. 


\section{The results} \label{sec:results}

The goal of this paper is to introduce the new algebraic-topological notion of asymmetric Nash cobordant compact Nash sets, and to use it to deal with the \textsl{algebraization problem}. We prove that a compact Nash set is semialgebraically homeomorphic to a real algebraic set if and only if it is (strongly) asymmetric Nash cobordant to a compact real algebraic set or, equivalently, if it is asymmetric Nash cobordant to a point. As an application, we obtain quite gene\-ral algebraization theorems, which describe, by means of transversa\-lity, new large classes of compact Nash sets semialgebraically homeomorphic to real algebraic sets.

In what follows, we use standard notions and results concerning real algebraic, Nash and semialgebraic sets. As usual, we assume that these sets are equipped with the euclidean topology. Our standard reference is \cite{BCR} (see also \cite{BR,Sh}).

Let us precise only the meaning we give to some basic notions from Nash geometry. Let $M$ be a locally closed semialgebraic subset of $\R^m$. Given an open semialgebraic subset $\Omega$ of $\R^m$ containing $M$, we say that $M$ is a Nash subset of $\Omega$ if it is the zero set of a finite family of Nash functions defined on $\Omega$. We underline that, by the results of \cite{CS3}, $M$ is a Nash subset of $\R^m$ if and only if $M$ is closed in $\R^m$ and it is a Nash subset of one of its open semialgebraic neighborhoods (see \cite{TT04}). By a Nash set, we mean a Nash subset of an open semialgebraic subset of some $\R^m$. A Nash manifold is a nonsingular Nash set; namely, a Nash submanifold of some $\R^m$ (see \cite[Definition 2.9.9]{BCR}). Let $M \subset \R^m$ and $N \subset \R^n$ be two Nash sets. A map $f:M \lra N$ is a Nash map if there exist an open semialgebraic neighborhood $U$ of $M$ in $\R^m$ and an extension $F:U \lra \R^n$ of $f$ from $U$ to $\R^n$, which is Nash; namely, semialgebraic and of class $\mscr{C}^\infty$.

We recall that a semialgebraic Whitney stratification of a locally closed semialgebraic set $A \subset \R^m$ is a finite stratification, whose strata are Nash submani\-folds of $\R^m$ satisfying conditions $a$ and $b$ of Whitney. We refer the reader to \cite{GWdPL,Sh2} for the general properties of Whitney and semialgebraic Whitney strati\-fications. Given a Nash submanifold $B$ of $\R^m$, we say that \textit{$B$ is transverse to $A$ in $\R^m$} if there exists a semialgebraic Whitney stratification $\{A_i\}_i$ of an open semialgebraic neighborhood of $A \cap B$ in $A$ such that $B$ is transverse to each stratum $A_i$ in $\R^m$.

For short, we say that a Nash set has an \textit{algebraic structure} if it is semialgebraically homeomorphic to a real algebraic set. Given integers $n,m \in \N$ with $n<m$, we identify $\R^n$ with the vector subspace $\R^n \times \{0\}$ of $\R^m=\R^n \times \R^{m-n}$. If $S$ is a subset of $\R^m$, then we denote by $\mr{Zcl}_{\R^m}(S)$ the Zariski closure of $S$ in $\R^m$.

Let us introduce the mentioned new notion of asymmetric Nash cobordism.

\begin{Def} \label{def:nash-cobordant}
Let $M$ be a compact Nash set. Given a compact Nash subset $M'$ of $\R^n$, we say that $M$ is \emph{asymmetric Nash cobordant to $M'$}, or \emph{asym-Nash cobordant to $M'$} for short, if there exist a compact Nash subset $S$ of some $\R^{m+1}$ with $n \leq m$ and $M' \subset S$, and a compact Nash subset $N$ of $S$ with $N \cap M'=\emptyset$ such that:
\begin{itemize}
 \item[$(\mr{i})$] $N$ is semialgebraically homeomorphic to $M$.
 \item[$(\mr{ii})$] $\R^m \cap S=N \cup M'$.
 \item[$\mr{(iii)}$] $\R^m$ is transverse to $S$ in $\R^{m+1}$ locally at $N$. More precisely, there exists an open semialgebraic neighborhood $W$ of $N$ in $S$ such that $\R^m$ is transverse to $W$ in $\R^{m+1}$.
\end{itemize}

If, in addition, $S$ satisfies condition $(\mr{iv})$ below, then we say that $M$ is \emph{strongly asymmetric Nash cobordant to $M'$}, or simply \emph{strongly asym-Nash cobordant to $M'$}:
\begin{itemize}
 \item[$(\mr{iv})$] $S$ is a union of connected components of $\mr{Zcl}_{\R^{m+1}}(S)$.
\end{itemize}

The compact Nash set $M$ is called \emph{Nash boundary} if it is asym-Nash cobordant to the empty set.
\end{Def}

The use of adjective ``asymmetric'' is justified by the fact that the statement of the preceding definition is not symmetric in $M$ and $M'$. A deeper motivation is revealed by Theorem \ref{thm:equiv} below. The reader compares Definition \ref{def:nash-cobordant} with the classical ``symmetric'' notion of cobordism between Whitney stratified compact subsets of $\R^m$ (see \cite[Definition 3.3]{go} with $X=\R^m$).

The concept of asymmetric Nash cobordism just defined generalizes the standard one of unoriented cobordism between compact smooth manifolds. Indeed, if $M$ and $M'$ are two unoriented cobordant compact Nash manifolds and $T$ is a compact smooth manifold-with-boundary having their disjoint union $M \sqcup M'$ as boundary, then, by standard Nash approximation results (see \cite{BCR,Sh}), the double of $T$ can be embedded into some $\R^{m+1}$ as a compact Nash submanifold $S$ in such a way that $\R^m \cap S$ is Nash isomorphic to $M \sqcup M'$ and $\R^m$ is transverse to $S$ in $\R^{m+1}$. In particular, it follows that, if a compact Nash manifold is a smooth boundary; namely, it is the boundary of a compact smooth manifold-with-boundary, then it is also a Nash boundary. It is worth noting that there exist topological obstructions for a compact Nash set to be a Nash boundary. Indeed, every Nash boundary is also a $\mscr{P}$-boundary (see \cite[Theorem 1.2]{gh07}). In particular, it must have even Euler characteristic.

The reader observes that, if $M$ is semialgebraically homeomorphic to $M'$, then it is also asym-Nash cobordant to $M'$. It suffices to set $S:=M' \times S^1 \subset \R^{n+2}=\R^n \times \R^2$, $N:=M' \times \{(1,0)\}$, $m:=n+1$ and to identify $M'$ with $M' \times \{(-1,0)\}$ in the preceding definition. For the same reason, if a compact Nash set has an algebraic structure, 
then it is also strongly asym-Nash cobordant to a compact real algebraic set.

Our main result proves the converse implication. It reads as follows.

\begin{Thm} \label{thm:main}
If a compact Nash set is strongly asym-Nash cobordant to a compact real algebraic set, then it has an algebraic structure.
\end{Thm}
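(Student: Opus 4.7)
The plan is to reduce the theorem, via the strong cobordism condition $(\mr{iv})$, to a general algebraization lemma for unions of connected components of real algebraic sets. Set $V := \mr{Zcl}_{\R^{m+1}}(S)$. By $(\mr{iv})$, one can write $V = S \sqcup V_0$, where $V_0$ is the union of the remaining connected components of $V$; it is closed in $\R^{m+1}$ and disjoint from $S$. Intersecting with $\R^m$ yields the real algebraic set $W := V \cap \R^m$, and by $(\mr{ii})$,
\[
W \;=\; (S \cap \R^m) \cup (V_0 \cap \R^m) \;=\; N \cup M' \cup (V_0 \cap \R^m).
\]
Since $N \cap M' = \emptyset$ by hypothesis and $N \cap V_0 = \emptyset$ (because $N \subset S$), the compact set $N$ is disjoint from the closed set $W \setminus N = M' \cup (V_0 \cap \R^m)$, hence lies at positive euclidean distance from it. Therefore $N$ is clopen in $W$; equivalently, $N$ is a union of connected components of the real algebraic set $W$.

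At this point, the theorem reduces to the following algebraization lemma, which I expect to be the paper's real content and to be proved using its new approximation procedures: \emph{every compact Nash set realized as a union of connected components of a real algebraic set has an algebraic structure}. Applying this lemma to $N$ and composing with the semialgebraic homeomorphism $M \cong N$ provided by $(\mr{i})$ yields the conclusion.

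The main obstacle is therefore establishing this lemma. A direct attempt is the ``inversion'' trick: seek a polynomial $p$ on $\R^m$ with $p|_{W \setminus N} \equiv 0$ and $p$ nowhere vanishing on $N$, and set $\widetilde W := \{(x, t) \in W \times \R : p(x)\, t = 1\}$, which is an algebraic set semialgebraically homeomorphic to $N$ via $(x, t) \mapsto x$. This construction succeeds precisely when $N \not\subset \mr{Zcl}_{\R^m}(W \setminus N)$; however, this inclusion can very well hold, for instance when a single irreducible component of $W$ contains euclidean pieces of both $N$ and $W \setminus N$. Overcoming this obstruction seems to require a more delicate algebraic modification of $W$ that polynomially distinguishes $N$ from $W \setminus N$, and this is where I expect the paper's new algebraic-topological approximation techniques to enter, with the transversality condition $(\mr{iii})$ supplying the local regularity of $W$ near $N$ needed to carry out the approximation.
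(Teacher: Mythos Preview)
Your reduction is valid, but the proposed lemma is not a reduction at all: it is equivalent to the full algebraization conjecture for compact Nash sets, which the paper leaves open. Indeed, by the Artin--Mazur theorem (see \cite[Theorem 8.4.4]{BCR}, and the paper's own proof of $(\mr{iii}) \Rightarrow (\mr{ii})$ in Theorem~\ref{thm:equiv}), \emph{every} compact Nash set is Nash isomorphic to a union of connected components of a real algebraic set. So in passing from $S$ to $W=V\cap\R^m$ you have discarded precisely the structure---the ambient cobordism $S$ and the transversality of $\R^m$ to $S$ locally at $N$---that distinguishes the hypothesis of Theorem~\ref{thm:main} from the general conjecture. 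Your closing speculation that condition $(\mr{iii})$ provides ``local regularity of $W$ near $N$'' is off target: $(\mr{iii})$ concerns transversality to $S$ in $\R^{m+1}$, and once you forget $S$ there is nothing left of it.

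The paper's route keeps $S$ throughout. First (Theorem~\ref{thm:equiv}, via the real algebraic blowing down of Lemma~\ref{lem:projectively-closed} and the proof of $(\mr{i})\Rightarrow(\mr{ii})$) it collapses the algebraic piece $M'$ to a single point, so one may assume $\R^m\cap S=N\sqcup\{0\}$. Then (Theorem~\ref{thm:betlee}) it replaces the slicing hyperplane $\R^m$ by an explicit family of nonsingular algebraic hypersurfaces $E_h=\{(x_{m+1}-1)^2+\|x\|^{2h}=1\}$ which $\mscr{C}^1$-approximate $\R^m$ on the neighborhood $W$ of $N$ but meet $S$ near the origin only at the isolated point $0$. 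By condition $(\mr{iv})$ and a Wallace-type separation (Lemma~\ref{lem:wallace-type}), the intersection $E_h\cap S$ coincides with $E_h\cap\mr{Zcl}_{\R^{m+1}}(S)$ and is therefore algebraic; deleting the isolated point $0$ leaves a genuine real algebraic set, and the semialgebraic Thom isotopy lemma (using $(\mr{iii})$) identifies it with $N$. The cobordism $S$ is thus not auxiliary scaffolding but the object on which the algebraic approximation is performed.
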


At first glance, one may hope to prove Theorem \ref{thm:main} by adapting the classical proof of the Nash-Tognoli theorem via standard algebraic approximation results and semialgebraic Thom's first isotopy lemma. This is not the case.

Let us explain why. Suppose to have a compact Nash manifold $M$ we want to make algebraic. Then there exist a compact Nash submanifold $S$ of some $\R^{m+1}$ such that $\R^m$ is transverse to $S$ in $\R^{m+1}$, $\R^m \cap S$ is equal to the disjoint union $N \sqcup M'$ of a compact Nash manifold $N$ diffeomorphic (and hence Nash isomorphic) to $M$ and of a compact nonsingular real algebraic set $M'$, and $S$ is a union of connected components of $S':=\mr{Zcl}_{\R^{m+1}}(S)$. This is the first part of the mentioned classical proof: see, for example, Theorem 14.1.10 of \cite[p. 378]{BCR}, where the notations are very different from the ones used here.

The classical proof proceeds as follows. Consider the Nash function $g:S' \lra \R$ equal to the projection $(x_1,\ldots,x_{m+1}) \longmapsto x_{m+1}$ on $S$ and to $1$ on $S' \setminus S$. Observe that $0$ is a regular value of $g$ and $g^{-1}(0)=N \sqcup M'$. Since the real algebraic set $M'$ is nonsingular, it follows that it is ``quasi regular'' (see \cite{To1}, p. 51) or, that is the same, it is ``nice'' (see \cite[p. 57]{AK}). For this reason, we can apply the relative Weierstrass approximation theorem (see \cite[Teorema 1]{To} or \cite[Lemma 2.8.1]{AK}), obtaining a regular function $h:S' \lra\R$ arbitrarily $\mscr{C}^\infty$-close to $g$ and vanishing on $M'$. In this way, by smooth Thom's first isotopy lemma, there exists a small smooth isotopy $(F_t:g^{-1}(0) \lra S)_{t \in [0,1]}$ from $g^{-1}(0)$ to $h^{-1}(0)$ in $S$ fixing $M'$. In particular, $F_1(N) \sqcup M'$ is equal to the nonsingular real algebraic set $h^{-1}(0)$. Since $M'$ is Zariski closed in $\R^{m+1}$, it follows immediately that $F_1(N)$ is a nonsingular real algebraic set, as desired.

Suppose now that $M$ is an arbitrary (possibly singular) compact Nash set strongly asym-Nash cobordant to a compact real algebraic set $M'$. Let $S \subset \R^{m+1}$ and $N$ be as in Definition \ref{def:nash-cobordant}. The reader observes that there are two serious obstructions to adapt the preceding proof to the present singular situation. First, we do not require that $M'$ is nice; hence, it is not possible to apply the mentioned relative Weierstrass approximation theorem to the function $g$ (which can be defined as above). Secondly, we imposed the transversality between $\R^m$ and $S$ locally at $N$, but not locally at $M'$. In this way, also in the case in which one would have a regular map $h$ arbitrarily $\mscr{C}^\infty$-close to $g$ and vani\-shing on $M'$, semialgebraic Thom's first isotopy lemma would not ensure the existence of a semialgebraic isotopy from $g^{-1}(0)$ to $h^{-1}(0)$ in $S$ fixing $M'$.

Our strategy to prove Theorem \ref{thm:main} is based on a new algebraic-topological approximation procedure. First, we reduce to the case in which $M'$ is a point by using the real algebraic blowing down operation and then we perform the algebraic approximation of $g$ by means of an \textit{ad hoc} family of nonsingular hypersurfaces of $\R^{m+1}$ ``converging to the boundary of a cylinder''. More precisely, we obtain Theorem \ref{thm:main} as an immediate consequence of the following two results.

\begin{Thm} \label{thm:equiv}
Let $M$ be a compact Nash set. The following assertions are equivalent:
\begin{itemize}
 \item[$(\mr{i})$] $M$ is strongly asym-Nash cobordant to a compact real algebraic set.
 \item[$(\mr{ii})$] $M$ is strongly asym-Nash cobordant to a point.
 \item[$(\mr{iii})$] $M$ is asym-Nash cobordant to a point.
\end{itemize}
\end{Thm}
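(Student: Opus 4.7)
The implications $(\mr{ii})\Rightarrow(\mr{i})$ and $(\mr{ii})\Rightarrow(\mr{iii})$ are immediate from Definition~\ref{def:nash-cobordant}: a single point is a compact real algebraic set, and every strong asym-Nash cobordism is in particular an asym-Nash cobordism. So the content of the theorem lies in the two remaining directions, which I would attack by distinct constructions.

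For $(\mr{i})\Rightarrow(\mr{ii})$ my plan is to carry out the real algebraic \emph{blowing down} alluded to in the introduction, collapsing the compact real algebraic set $M'$ inside the cobordism to a single point. Writing $M'\subset\R^m$ as $\{p_1=\cdots=p_k=0\}$ and setting $q:=p_1^2+\cdots+p_k^2$, a natural candidate is the polynomial map
\[
\Pi : \R^m \times \R \lra \R^{m+2}, \qquad \Pi(x,t) := \bigl(q(x)\,x,\, q(x),\, t\bigr),
\]
which collapses each slice $M'\times\{t\}$ to the point $(0,0,t)$, is injective on $(\R^m \setminus M')\times\R$ (with rational inverse $(y,s,t)\mapsto(y/s,t)$), and commutes with the projection onto the last coordinate. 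Setting $\tilde S := \Pi(S)$, I would use the fact that $\Pi$ is biregular off the algebraic locus $M'\times\R$ together with condition~$(\mr{iv})$ for $S$ to conclude that $\tilde S$ is a compact Nash subset of $\R^{m+2}$ and a union of connected components of its Zariski closure. The transversality at $N$ (where $q>0$, so $\Pi$ is a polynomial diffeomorphism onto its image) passes to the transversality of $\R^{m+1}\times\{0\}$ to $\tilde S$ at $\Pi(N)$, and one checks that $(\R^{m+1}\times\{0\})\cap\tilde S=\Pi(N)\cup\{(0,0,0)\}$ with $\Pi|_N$ a semialgebraic homeomorphism onto its image. Hence $\tilde S$ realizes a strong asym-Nash cobordism from $M$ to the single point $\{(0,0,0)\}$.

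For $(\mr{iii})\Rightarrow(\mr{ii})$ one must upgrade a general asym-Nash cobordism to one satisfying condition~$(\mr{iv})$. My plan is to exploit the simplicity of the target point: form the double $DS := S \cup \rho_p(S)\subset\R^{m+1}$, where $\rho_p$ is the central symmetry through $p$; by the transversality of $\R^m$ to $S$ at $N$ and the disjointness $N\cap\{p\}=\emptyset$, the union $DS$ is a compact Nash subset of $\R^{m+1}$ symmetric with respect to $p$ and having $p$ as a non-isolated point. I would then invoke the algebraization result for point-symmetric Nash sets recalled in $(\mr{Ex}2)$ to transport $DS$ to a real algebraic model respecting the symmetry, and extract an ``algebraic half'' of this model centered at the image of $p$ to produce the required strong asym-Nash cobordism from $M$ to a point.

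I expect the chief technical difficulty to lie in the $(\mr{i})\Rightarrow(\mr{ii})$ step, specifically in verifying that $\tilde S = \Pi(S)$ is a genuine Nash subset of $\R^{m+2}$ (and not merely semialgebraic) and that condition~$(\mr{iv})$ is inherited. The map $\Pi$ is only birational, not biregular, and the collapsed locus $\{(0,0,t)\}$ introduces a potentially singular component at the image of $M'$; a careful analysis of the connected components of $\mr{Zcl}_{\R^{m+2}}(\tilde S)$---some arising as biregular images of components of $\mr{Zcl}_{\R^{m+1}}(S)$ off $M'\times\R$, others contributed by the collapsed locus---is the delicate point the proof must resolve, and may require a more refined choice of $\Pi$ (for instance, one whose components separate $M'$ from the rest of $S$ so as to produce a Nash image).
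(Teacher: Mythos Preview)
Your plan for $(\mr{i})\Rightarrow(\mr{ii})$ has the right architecture---collapse $M'$ to a point via a polynomial ``blow-down'' map---and matches the paper's approach in spirit. The gap you yourself flag is real and is exactly what the paper resolves: with an arbitrary sum of squares $q$, the image $\Pi(S)$ need not be Nash, and its Zariski closure may acquire extraneous components. The paper's fix is to first replace $M'$ by a \emph{projectively closed} copy (via a biregular re-embedding of $\R^m$, Lemma~\ref{lem:projectively-closed}), so that $M'$ becomes the zero set of an \emph{overt} polynomial $E$. One then uses the graph map $\alpha(x)=(x,E(x))$ followed by $\beta(x,x_{m+2})=(x\,x_{m+2},x_{m+2})$; overtness of $E$ means its leading form $E_e$ vanishes only at $0$, and clearing denominators in the defining equation of $\beta(\alpha(S'))$ yields a polynomial $E_e(X)^{2\ell}+X_{m+2}R(X,X_{m+2})$ whose zero set meets $\{x_{m+2}=0\}$ only at the origin. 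This is precisely the ``more refined choice of $\Pi$'' you anticipate, and without the overtness hypothesis the Zariski-closure step collapses.

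Your plan for $(\mr{iii})\Rightarrow(\mr{ii})$ has a genuine gap. The result $(\mr{Ex}2)$ yields only a semialgebraic homeomorphism from the doubled set $DS$ to \emph{some} real algebraic set; it does not say that $DS$ is itself a union of components of its Zariski closure, and the homeomorphism has no reason to carry $\R^m$ to a hyperplane, to preserve the transversality at $N$, or to send $p$ to a point whose slice-complement is still a copy of $N$. So there is no well-defined ``algebraic half'' to extract satisfying Definition~\ref{def:nash-cobordant}. The paper's argument is much more direct: pick a Nash function $h$ on $\R^{m+1}$ with zero set $S$ and apply the Artin--Mazur theorem to obtain a nonsingular real algebraic variety $V\subset\R^{m+2+k}$, a connected component $V'$, and a Nash isomorphism $\sigma:\R^{m+1}\to V'$ such that $h=\tau\circ\sigma$ for a coordinate projection $\tau$. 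Then $\sigma(S)=V'\cap\tau|_V^{-1}(0)$ is automatically a union of connected components of the algebraic set $\tau|_V^{-1}(0)$, hence of its own Zariski closure, and the hyperplane slice and local transversality at $N$ are carried over by the isomorphism $\sigma$. No symmetry is needed.
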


\begin{Thm} \label{thm:betlee}
If a compact Nash set is strongly asym-Nash cobordant to a point, then it has an algebraic structure.
\end{Thm}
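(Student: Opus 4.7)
By hypothesis and Definition~\ref{def:nash-cobordant}, there exist a compact Nash set $S \subset \R^{m+1}$, a point $p \in \R^m \cap S$, and a compact Nash subset $N \subset S$ semialgebraically homeomorphic to $M$, such that $N \cap \{p\} = \emptyset$, $\R^m \cap S = N \cup \{p\}$, $\R^m \trsv S$ locally at $N$, and $S$ is a union of connected components of $S' := \mr{Zcl}_{\R^{m+1}}(S)$. I plan to adapt the classical Nash--Tognoli argument sketched in the excerpt: define the Nash function $g: S' \to \R$ by $g|_S(x_1,\dots,x_{m+1}) = x_{m+1}$ and $g|_{S'\setminus S} \equiv 1$; this is Nash because $S$ and $S' \setminus S$ are disjoint clopen subsets of $S'$, and $g^{-1}(0) = N \cup \{p\}$ with $0$ a regular value of $g$ in a neighborhood of $N$ by the transversality hypothesis. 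The relative Weierstrass step of the classical proof breaks down here, since $p$ need not be ``nice'' in $S'$, so I would replace it with an ad hoc algebraic approximation that exploits a higher-order vanishing at $p$. Concretely, I introduce the modified Nash function
\[
\tilde g(x) := g(x) \cdot |x-p|^2
\]
on $S'$: it still vanishes precisely on $N \cup \{p\}$, but the zero at $p$ is now of order $\geq 3$ on $S$ (since $|x_{m+1}| \leq |x-p|$ for $x$ near $p$), while on $S' \setminus S$ one has $\tilde g = |x-p|^2 \geq c$ for the positive constant $c$ equal to the squared distance from $p \in S$ to the disjoint closed set $S' \setminus S$.

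Next I would approximate $\tilde g$ on $S'$ in $\mscr{C}^\infty$ topology by a regular function $\hat g = P_1/P_2$, with $P_1, P_2 \in \R[x_1,\dots,x_{m+1}]$ and $P_2$ non-vanishing on $S'$, and consider the real algebraic set
\[
V_\tau := \{x \in S' : P_1(x) - \tau P_2(x) = 0\} \subset \R^{m+1}
\]
for small $\tau > 0$. The family $\{V_\tau\}_{\tau > 0}$ is the promised ad hoc family of nonsingular hypersurfaces ``converging to the boundary of a cylinder'' around $p$ as $\tau \to 0^+$. A four-regime case analysis then shows $V_\tau$ to be semialgebraically homeomorphic to $N$, hence to $M$: (a) on $S' \setminus S$, $\hat g \geq c/2 > \tau$ for small $\tau$, so $V_\tau$ is empty there; (b) on $S$ near $p$, continuity of $\tilde g$ at $p$ combined with the closeness of $\hat g$ to $\tilde g$ gives $|\hat g| < \tau$ on a neighborhood of $p$, so $V_\tau$ is empty there; (c) on $S$ away from $N \cup \{p\}$, $g$ and hence $\tilde g$ is bounded away from $0$, so $V_\tau$ is empty for small $\tau$; and (d) on $S$ near $N$, the function $\tilde g$ has a transverse zero along $N$ because $\nabla \tilde g|_N = |x-p|^2 \, \nabla g|_N$ is a nonzero scalar multiple of $\nabla g|_N$, which is transverse to $N$ in $S$ by hypothesis. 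Hence the semialgebraic Thom's first isotopy lemma, applied first to the family $\{\tilde g = \tau\}$ and then to its $\mscr{C}^\infty$-small perturbation $\{\hat g = \tau\}$, yields that $V_\tau$ in a neighborhood of $N$ is semialgebraically homeomorphic to $N$.

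I expect the main obstacle to lie in the approximation step. Since $S$ is Zariski-dense in $S'$ (one has $I(S) = I(S')$), no polynomial can algebraically separate $S$ from $S' \setminus S$, so the regular approximation $\hat g$ of $\tilde g$ must genuinely use a non-trivial denominator; moreover, the classical Nash-to-regular approximation results require care when $S'$ is singular at $p$, and we need additional quantitative control --- namely, the preservation of the positive lower bound on $\tilde g|_{S' \setminus S}$, the compactness of $V_\tau$ (which may demand $\deg P_1 > \deg P_2$ to avoid escape at infinity in unbounded components of $S' \setminus S$), and the transverse zero structure along $N$ after perturbation. Producing such an approximation, presumably by directly exhibiting the ad hoc family of nonsingular hypersurfaces rather than invoking an off-the-shelf theorem, is the technical heart of the proof.
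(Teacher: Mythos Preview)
Your plan has a genuine gap in the level-set analysis near $p$, specifically in the interplay between regimes (b) and (c). You claim that on a neighbourhood of $p$ one has $|\hat g| < \tau$, hence $V_\tau = \{\hat g = \tau\}$ is empty there, while on $S$ away from $N \cup \{p\}$ one has $|\hat g| > \tau$. But these two regions must together cover $S$ minus a neighbourhood of $N$, and on the interface between them $|\hat g| = \tau$; wherever $\hat g$ is positive there, that interface already lies in $V_\tau$. Concretely: on any branch of $S$ at $p$ contained in the half-space $\{x_{m+1} > 0\}$ --- and nothing in the hypotheses forbids such branches --- the function $\tilde g = x_{m+1}\,|x-p|^2$ runs continuously from $0$ at $p$ to strictly positive values, so $\{\tilde g = \tau\}$ meets that branch for every small $\tau > 0$, and the same holds for any $\mscr{C}^0$-close $\hat g$. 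No choice of $\tau$ or quality of approximation eliminates this spurious piece of $V_\tau$ near $p$. Multiplying by $|x-p|^2$ raises the order of vanishing at $p$ but does nothing about the \emph{sign} of $\tilde g$ there, and it is the sign that governs whether the positive level set can escape a neighbourhood of $p$.

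The paper's proof sidesteps this by a different mechanism. First, a Wallace-type preparation (Lemma~\ref{lem:wallace-type}) arranges $p = 0$, $S \subset B_m(1/2) \times (-1/2,1/2)$, and $S' \setminus S$ pushed outside $\bar B_m(2) \times [-2,2]$; this makes any approximation of a function distinguishing $S$ from $S' \setminus S$ unnecessary. Then, rather than taking a level set $\{\hat g = \tau\}$ that tries to dodge $p$, one intersects $S$ with the explicit algebraic hypersurface $E_h = \{(x_{m+1}-1)^2 + \|x\|^{2h} = 1\}$, which \emph{does} pass through $0$ but is tangent to $\R^m$ there to order $2h$. A \L ojasiewicz-type estimate (Lemma~\ref{lem:Q}) shows that for $h$ large the intersection $E_h \cap S$ meets a neighbourhood of $0$ only at the isolated point $0$; and the geometric preparation guarantees $E_h \cap S = E_h \cap S'$, which is genuinely algebraic. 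Removing the isolated point $0$ then yields an algebraic model of $N$. The conceptual difference is this: instead of pushing the hypersurface off $p$ (which forces an uncontrolled crossing nearby), one lets it touch $p$ so flatly that $p$ becomes an isolated, hence removable, point of the intersection.
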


We observe that, by definition, if a compact Nash set $M$ is asym-Nash cobordant to the empty set (namely, if $M$ is a Nash boundary), then it is also asym-Nash cobordant to a point. In this way, thanks to Theorem \ref{thm:equiv}, we have that $M$ is strongly asym-Nash cobordant to a point as well. By applying Theorem \ref{thm:betlee} to $M$, we infer at once the following significant result.

\begin{Cor} \label{cor:nash-boundaries}
Every Nash boundary has an algebraic structure.
\end{Cor}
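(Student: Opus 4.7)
The plan is to deduce Corollary \ref{cor:nash-boundaries} as an essentially formal consequence of Theorems \ref{thm:equiv} and \ref{thm:betlee}, using only the easy observation that a cobordism to the empty set can be promoted to a cobordism to a point by throwing in one extra isolated point.

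Let $M$ be a Nash boundary. By Definition \ref{def:nash-cobordant} applied with $M'=\emptyset$, there exist an integer $m$, a compact Nash subset $S\subset\R^{m+1}$, and a compact Nash subset $N\subset S$ such that $N$ is semialgebraically homeomorphic to $M$, $\R^m\cap S=N$, and $\R^m$ is transverse to $S$ in $\R^{m+1}$ locally at $N$. The first step I would carry out is to upgrade this to a cobordism to a point. Since $S$ is compact, I can pick any point $p\in\R^m\subset\R^{m+1}$ lying outside $S$ (e.g.\ far from the bounded set $N$), and set
\[
S^{\ast}:=S\sqcup\{p\}.
\]
Then $S^{\ast}$ is a compact Nash subset of $\R^{m+1}$ containing $\{p\}$, one has $N\cap\{p\}=\emptyset$ and $\R^m\cap S^{\ast}=N\cup\{p\}$. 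The transversality requirement (iii) of Definition \ref{def:nash-cobordant} is inherited from $S$ because $S^{\ast}$ coincides with $S$ on an open semialgebraic neighborhood of $N$ (the point $p$ being isolated in $S^{\ast}$ and away from $N$). Thus $M$ is asym-Nash cobordant to the point $\{p\}$, i.e.\ condition (iii) of Theorem \ref{thm:equiv} holds.

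The second step is to invoke Theorem \ref{thm:equiv}: the implication (iii)$\Rightarrow$(ii) yields that $M$ is \emph{strongly} asym-Nash cobordant to a point. The third and final step is to apply Theorem \ref{thm:betlee} to conclude that $M$ has an algebraic structure.

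There is no real obstacle in this corollary — the entire content is packed into the two preceding theorems. The only mild point worth double-checking in a careful write-up is that adjoining the isolated point $p$ does not spoil any of the Nash/semialgebraic hypotheses and does not disturb the local transversality at $N$; both are immediate from the fact that $\{p\}$ is an isolated singleton of $S^{\ast}$ disjoint from a neighborhood of $N$.
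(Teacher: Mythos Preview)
Your argument is correct and is exactly the approach the paper takes: the paper observes (in the paragraph immediately preceding the corollary) that a Nash boundary is, by adjoining a single isolated point, asym-Nash cobordant to a point, then applies Theorem \ref{thm:equiv} to upgrade this to a strong asym-Nash cobordism to a point, and finally invokes Theorem \ref{thm:betlee}. Your write-up simply makes the ``adjoin a point'' step explicit, which the paper glosses over with the phrase ``by definition''.
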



Let us present some applications of our results.

First, we need to extend to Nash maps the notion of transversality between Nash submanifolds and locally closed semialgebraic subsets of $\R^m$, given above.

Let $A$ be a Nash set, let $N$ be a Nash manifold, let $B$ be a Nash subset of $N$ and let $f:A \lra N$ be a Nash map. We say that \textit{$f$ is transverse to $B$} if there exist a semialgebraic Whitney stratification $\{A_i\}_i$ of an open semialgebraic neighborhood of $f^{-1}(B)$ in $A$ and a semialgebraic Whitney stratification $\{B_j\}_j$ of an open semialgebraic neighborhood of $f(f^{-1}(B))$ in $N$ such that the restriction of $f$ to each stratum $A_i$ is transverse to each stratum $B_j$ in $N$. In the case in which $A$ is a Nash subset of $N$ and the inclusion map $A \hookrightarrow N$ is transverse to $B$, we say that \textit{$A$ is transverse to $B$ in $N$} and also that the set $A \cap B$ is the \textit{transverse intersection of $A$ and $B$ in $N$}.

In the latter situation, the condition of transversality between $A$ and $B$ in $N$ can be restated explicitly as follows: $A$ is transverse to $B$ in $N$ if there exist a semialgebraic Whitney stratification $\{A_i\}_i$ of an open semialgebraic neighborhood of $A \cap B$ in $A$ and a semialgebraic Whitney stratification $\{B_j\}_j$ of an open semialgebraic neighborhood of $A \cap B$ in $B$ such that each stratum $A_i$ is transverse to each stratum $B_j$ in $N$.

We remind the reader that a real algebraic set is said to have totally algebraic homology if each of its homology classes over $\Z_2$ can be represented by a real algebraic subset. Remarkable examples of compact nonsingular real algebraic sets with totally algebraic homology are the standard unit spheres and the grassmannians.

Thanks to Theorem \ref{thm:main}, we obtain the following quite general algebraization result.

\begin{Thm} \label{thm:a}
Let $X$ be a compact real algebraic set, let $Y$ be a nonsingular real algebraic set, let $Z$ be a real algebraic subset of $Y$ and let $f:X \lra Y$ be a Nash map transverse to $Z$. Then, in each of the following two cases, the compact Nash set $f^{-1}(Z)$ has an algebraic structure:
\begin{itemize}
 \item[$(\mr{i})$] $f$ is $\mscr{C}^0$-homotopic to a regular map.
 \item[$(\mr{ii})$] $X$ is a real algebraic subset of some compact nonsingular real algebraic set $V$ with totally algebraic homology, $Y$ has totally algebraic homology and the map $f$ admits a Nash extension from the whole $V$ to $Y$.
\end{itemize}
\end{Thm}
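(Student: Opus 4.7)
The strategy is to verify, in both cases, that $f^{-1}(Z)$ is strongly asymmetric Nash cobordant to a compact real algebraic set, and then invoke Theorem~\ref{thm:main}.

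Case~(ii) reduces to case~(i). Since $V$ and $Y$ are compact nonsingular real algebraic sets with totally algebraic homology, classical results on regular approximation (Benedetti--Tognoli, Akbulut--King) imply that every Nash map $V\lra Y$ is $\mscr{C}^0$-homotopic to a regular map. Applied to the given Nash extension $\tilde{f}:V\lra Y$ of $f$, this produces a regular $\tilde{g}:V\lra Y$ with $\tilde{f}\simeq\tilde{g}$; restricting the homotopy to $X$ yields a $\mscr{C}^0$-homotopy from $f$ to the regular map $g:=\tilde{g}|_X$. Thus it suffices to treat case~(i).

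Assume $f$ is $\mscr{C}^0$-homotopic to a regular map $g:X\lra Y$ via $H_0:X\times[0,1]\lra Y$. By Nash approximation of continuous maps into the Nash manifold $Y$, possibly after a small regular perturbation of $g$ to make it transverse to $Z$ in the stratified sense, I replace $H_0$ by a Nash map $H:X\times[0,1]\lra Y$ with $H(\cdot,0)=f$ and $H(\cdot,1)=g$. A standard relative transversality--density argument, keeping the boundary $X\times\{0,1\}$ fixed where $H$ is already transverse to $Z$, then provides such an $H$ with $H\trsv Z$. Hence $C:=H^{-1}(Z)\subset X\times[0,1]$ is a compact Nash set meeting $X\times\{0\}$ and $X\times\{1\}$ in $f^{-1}(Z)$ and $g^{-1}(Z)$, respectively. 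Writing $X\subset\R^k$, I realise the cobordism in $\R^{k+2}$ through the polynomial Nash embedding $\phi(x,s):=(x,s,s(1-s))$, and set $\mathcal{S}:=\phi(C)$, $N:=\phi(f^{-1}(Z)\times\{0\})$, $M':=\phi(g^{-1}(Z)\times\{1\})$, and $m:=k+1$. One checks that $\R^m=\R^k\times\R\times\{0\}$ meets $\mathcal{S}$ precisely in $N\cup M'$ (since $s(1-s)=0$ iff $s\in\{0,1\}$), that $M'$ is real algebraic (as $g$ is regular and $Z$ is algebraic), that $N$ is semialgebraically homeomorphic to $f^{-1}(Z)$, and that $\R^m$ is transverse to $\mathcal{S}$ at $N$ (the simple zero of $s(1-s)$ at $s=0$ combined with $H\trsv Z$ along $\{s=0\}$ forces the tangent cone of $\mathcal{S}$ at each point of $N$ to contain a vector with non-zero last coordinate). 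This yields an asymmetric Nash cobordism from $f^{-1}(Z)$ to the real algebraic set $M'$.

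The main obstacle is to promote this to a \emph{strong} asymmetric Nash cobordism, since the image $\mathcal{S}$ of the polynomial embedding will in general be a proper semialgebraic subset of its Zariski closure, not a union of connected components. I would bypass this by first replacing $M'$ with a point via Theorem~\ref{thm:equiv}: being algebraic, $M'$ is (strongly) asym-Nash cobordant to a point by Theorem~\ref{thm:equiv}, and I would concatenate this second cobordism with the one constructed above at the common end $M'$, smoothing the junction away from the hyperplane $\R^m$ (possible because transversality in Definition~\ref{def:nash-cobordant} is required only at the $N$-end and not at the algebraic end). The outcome is an asymmetric Nash cobordism from $f^{-1}(Z)$ to a point. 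The implication (iii)$\Rightarrow$(i) of Theorem~\ref{thm:equiv} then converts this into a strong asymmetric Nash cobordism to a compact real algebraic set, and Theorem~\ref{thm:main} concludes that $f^{-1}(Z)$ admits an algebraic structure.
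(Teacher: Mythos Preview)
Your argument has several genuine gaps. First, the set $\mathcal{S}=\phi(C)$ is \emph{not} a compact Nash set: any Nash extension $\widetilde{H}$ of $H$ to $X\times(-\varepsilon,1+\varepsilon)$ produces a Nash set $\widetilde{C}=\widetilde{H}^{-1}(Z)$ that continues past $s=0$ and $s=1$, so your $\mathcal{S}$ is only the half $\{t\geq 0\}$ of the Nash set $\widetilde{\phi}(\widetilde{C})$; a compact semialgebraic set that is, near each point of $N$, strictly one side of a Nash germ cannot itself be a Nash set. This is exactly why the paper doubles the homotopy over $S^1$ rather than working over $[0,1]$. Second, the concatenation step is unjustified: asymmetric Nash cobordisms are compact Nash subsets of euclidean space, and there is no gluing operation in the Nash category that lets you splice two such sets along a common end while keeping the result Nash. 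The phrase ``smoothing the junction away from the hyperplane'' hides precisely the difficulty that the paper's machinery is built to avoid. Third, your reduction of $(\mr{ii})$ to $(\mr{i})$ rests on the claim that every Nash map between compact nonsingular real algebraic sets with totally algebraic homology is $\mscr{C}^0$-homotopic to a regular map; this is not a classical result. What is classical (Lemma~2.7.1 of \cite{AK}) is only that such a map is \emph{unoriented bordant} to a regular map, and the paper's proof of $(\mr{ii})$ is organized around this weaker statement.

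The paper's route is structurally different from yours: rather than building a cobordism directly from the homotopy and then trying to upgrade it, it constructs the cobordism \emph{inside a nonsingular real algebraic set $L$} carrying a regular map $R:L\lra Y$, produced by a tailored workhorse-type approximation (Lemma~\ref{lem:approx} for $(\mr{i})$, Theorem~2.8.3 of \cite{AK} for $(\mr{ii})$). Then $S':=R^{-1}(Z)$ is automatically real algebraic, and the compact Nash set $S:=L_0\cap S'$, with $L_0$ an open semialgebraic subset of $L$ Nash-isomorphic to $X\times S^1$ (resp.\ to the bordism $B$), is both open and compact in $S'$, hence a union of its connected components. This yields a \emph{strong} asym-Nash cobordism to a real algebraic set in a single step, with no gluing and no need to pass through Theorem~\ref{thm:equiv}.
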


We remark that, in order to prove the latter theorem, we will need the semialgebraic version of a powerful isotopy result of Thom (see Corollary \ref{cor:thom}) and a new nonstandard version of a basic algebraic approximation result of Akbulut and King, the so-called workhorse theorem (see Lemma \ref{lem:approx} and Remark \ref{rem:nonstandard}). Furthermore, the asymmetry of Defi\-nition \ref{def:nash-cobordant} will play a crucial role, at least in the proof of point $(\mr{i})$ (see Remarks \ref{rem:2.6i}, \ref{rem:2.6ii} and \ref{rem:2.6ii-bis}). Finally, as a bypro\-duct of the proof of point $(\mr{ii})$, we obtain a new algebraic approximation result for Nash maps between nonsingular real algebraic sets with totally algebraic homology (see Theorem \ref{thm:approx-new}), which is interesting in its own right. 

Our next result gives two simple ways of constructing Nash boundaries, which have algebraic structures, as we saw in Corollary \ref{cor:nash-boundaries}. As usual, a subset of $\R^m$ is said to be proper if it is different from $\R^m$.

\begin{Thm} \label{thm:b}
In each of the following two cases, the compact Nash set $M$ is a Nash boundary and hence it has an algebraic structure:
\begin{itemize}
 \item[$(\mr{i})$] $M$ is the product of a Nash boundary (for example, a compact Nash mani\-fold which is a smooth boundary) by an arbitrary compact Nash set.
 \item[$(\mr{ii})$] $M$ is the transverse intersection of two proper Nash subsets of some $\R^m$, one of which is compact.
\end{itemize}
\end{Thm}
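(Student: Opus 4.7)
The plan is to exhibit, in each of the two cases, a compact Nash set $S$ together with a compact Nash subset $N \subset S$ that certify $M$ as a Nash boundary; the algebraic structure of $M$ then follows from Corollary~\ref{cor:nash-boundaries}.

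For (i), I would use a direct product construction. Write $M = K \times L$ with $K$ a Nash boundary and $L \subset \R^q$ a compact Nash set, and fix a witness for $K$: a compact Nash set $S_K \subset \R^{p+1}$, a compact Nash subset $N_K \subset S_K$ semialgebraically homeomorphic to $K$, and an open semialgebraic neighborhood $W$ of $N_K$ in $S_K$ with $\R^p \cap S_K = N_K$ and $\R^p \trsv W$ in $\R^{p+1}$. Set $\tilde S := L \times S_K \subset \R^q \times \R^{p+1} = \R^{p+q+1}$; then the standard hyperplane $\R^{p+q} = \R^q \times \R^p \times \{0\}$ cuts $\tilde S$ in $L \times N_K$, which is semialgebraically homeomorphic to $M$. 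The product of any semialgebraic Whitney stratification of $L$ with the stratification of $W$ witnessing $\R^p \trsv W$ is a Whitney stratification of $L \times W$, and on each product stratum $L_j \times W_i$ one has $T(L_j \times W_i) + T\R^{p+q} = \R^q \times (TW_i + \R^p) = \R^q \times \R^{p+1}$, so $\R^{p+q} \trsv \tilde S$ locally at $L \times N_K$, completing (i).

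For (ii), let $A \subset \R^m$ be compact Nash, $B \subset \R^m$ a proper Nash set, and $M = A \cap B$ a transverse intersection. First I would fix a bounded open semialgebraic neighborhood $\Omega$ of $A$ and Nash functions $g_1,\ldots,g_k \colon \Omega \to \R$ with $B \cap \Omega = \{g_1 = \cdots = g_k = 0\}$. In the clean codimension-one case $k=1$ with $dg_1 \neq 0$ along $B$ near $M$, the graph $S := \{(x, t) \in A \times \R : t = g_1(x)\}$ is a compact Nash subset of $\R^{m+1}$ that meets $\R^m$ exactly along $M$; the stratified transversality of $A$ with $B$ forces $d(g_1)|_{TA_i} \neq 0$ at each $x \in M \cap A_i$, which is precisely the transversality of $\R^m$ with $S$ at $M$. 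For the general case I would embed $A$ into $\R^{m+k}$ via the full graph of $g = (g_1,\ldots,g_k)$ and aim to realise $M$ as the transverse slice of this graph by a codimension-one Nash hypersurface in $\R^{m+k}$ which, after a Nash isomorphism of a neighborhood, becomes a genuine hyperplane in some $\R^{m'+1}$.

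The main obstacle will be the case in which $B$ is singular along $M$ or $k > 1$: the obvious candidate coordinate $|g|^2$ has vanishing differential on $\{g = 0\}$ and therefore produces a tangential rather than transverse intersection at $M$. To overcome this I would exploit the stratified transversality hypothesis locally. At each $x_0 \in M$, with strata $A_i \ni x_0$ and $B_j \ni x_0$, the equality $T_{x_0}A_i + T_{x_0}B_j = \R^m$ combined with a local Nash submersive parametrisation of $B_j$ produces a clean codimension-one local slicing transverse to $A$. Globalising these local pieces into a single compact Nash set $S \subset \R^{m'+1}$ and checking that $\R^{m'} \cap S$ is semialgebraically homeomorphic to $M$ with $\R^{m'} \trsv S$ there will require a careful Nash approximation argument modelled on the techniques developed in this paper for Theorem~\ref{thm:a}, including the semialgebraic Thom first isotopy lemma, to ensure that the transverse slice structure is preserved across all strata of $M$.
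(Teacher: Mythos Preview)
Your treatment of (i) is correct and coincides with the paper's: take the product of the witnessing Nash set with the second factor, use the product Whitney stratification, and slice by the obvious hyperplane.

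For (ii) there is a genuine gap. Your graph-of-equations idea works only in the artificial situation where $B$ is cut out near $M$ by a single Nash function with nonvanishing differential; beyond that you correctly identify the obstruction (any scalar combination such as $|g|^2$ is tangential along $B$) but then offer only a programme --- local submersive charts of strata, followed by an unspecified globalisation ``modelled on the techniques developed for Theorem~\ref{thm:a}''. That is not a proof, and it is far from clear how to patch local codimension-one slices into a single compact Nash $S$ with the right hyperplane section.

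The paper sidesteps all of this with one global construction. Write $M=P\cap Q$ with $Q$ compact and $P$ proper, translate so that $0\notin P$, and choose radii $r<r'$ with $\bar B_m(r)\cap P=\emptyset$ and $Q\subset\bar B_m(r')$. Define the polynomial map
\[
H:\R^{m+1}\lra\R^m,\qquad H(x,x_{m+1}):=\Bigl(\tfrac{r'-r}{r}\,x_{m+1}^2+1\Bigr)x,
\]
and set $S:=(P\times\R)\cap H^{-1}(Q)$. The choice of $r,r'$ forces $S\subset\bar B_m(r')\times[-1,1]$, so $S$ is compact, and $\R^m\cap S=M$ since $H|_{\R^m\times\{0\}}=\mr{id}_{\R^m}$. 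Because $dH_{(x,0)}$ is the standard projection $\R^{m+1}\to\R^m$, one has $T_{(x,0)}H^{-1}(W_h)=T_xW_h\times\R$ for each stratum $W_h$ of $Q$; the assumed transversality of the strata of $P$ and $Q$ in $\R^m$ then yields both the transversality of $V_k\times\R$ with $H^{-1}(W_h)$ and the transversality of $\R^m$ with $S$ along $M$. No local patching, no approximation, and no regularity hypothesis on defining equations is needed.
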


In the semialgebraic setting, preceding point $(\mr{i})$ generalizes $(\mr{Ex}1)$.

We have a conjecture.

\begin{Conj}
Every compact Nash set is strongly asym-Nash cobordant to a compact real algebraic set or, equivalently, every compact Nash set is asym-Nash cobordant to a point.
\end{Conj}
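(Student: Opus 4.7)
The plan is to combine Theorem~\ref{thm:equiv} with a direct geometric construction of a cobordism to a point. By Theorem~\ref{thm:equiv}, for a given compact Nash set $M$ it suffices to exhibit an integer $m \geq \dim M$, a compact Nash subset $S \subset \R^{m+1}$, a point $p \in \R^m$, and a compact Nash subset $N \subset S$ with $p \notin N$, such that $N$ is semialgebraically homeomorphic to $M$, $\R^m \cap S = N \cup \{p\}$, and $\R^m$ is transverse to $S$ in $\R^{m+1}$ locally at $N$. The key structural feature I would exploit is the \emph{asymmetry} in Definition~\ref{def:nash-cobordant}: $S$ may be arbitrarily singular away from $N$, and no regularity of any kind is required at $p$. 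This makes the construction of $S$ much more flexible than that of a classical cobordism.

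I would first settle the nonsingular case, which essentially recovers the Nash--Tognoli theorem in this framework. Pick $p \in \R^m \setminus M$ and form the Nash tube $M_\varepsilon = \{m + s\, e_{m+1} : m \in M,\ |s| \leq \varepsilon\} \subset \R^{m+1}$, a compact Nash manifold-with-boundary crossing $\R^m$ transversely along $M$. Then cap $M_\varepsilon$ by a compact semialgebraic piece $C$ that joins $\partial M_\varepsilon$ to $p$ (this can be done using standard Nash tubular neighborhood theorems, see \cite{BCR}) so that $S := M_\varepsilon \cup C$ is a compact Nash subset of $\R^{m+1}$ meeting $\R^m$ only in $M \cup \{p\}$. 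This handles Nash manifolds and, via the same construction applied fiber-wise, also the cases covered by Theorems~\ref{thm:a} and~\ref{thm:b}.

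For an arbitrary compact Nash set $M$ with singularities, the plan is to invoke Hironaka's desingularization to obtain a compact Nash manifold $\widetilde{M}$ and a proper Nash surjection $\pi \colon \widetilde{M} \to M$ that is a Nash isomorphism over $\mr{Reg}(M)$. By the nonsingular case, build $\widetilde{S} \subset \R^{m+1}$ with slice $\widetilde{N} \sqcup \{\tilde p\}$, $\widetilde{N} \cong_{\mr{sa}} \widetilde{M}$, and $\R^m$ transverse to $\widetilde{S}$ locally at $\widetilde{N}$. The next step is to extend $\pi$ to a Nash map $\Pi \colon \widetilde{S} \to \R^{m'+1}$ (with $m' \geq m$) which restricts to $\pi$ on $\widetilde{N}$, then take $S := \Pi(\widetilde{S})$. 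If one can moreover arrange the extension so that $\Pi$ is ``stable'' across $\widetilde{N}$ in a Nash-transverse sense, the ambient transversality $\R^{m'} \pitchfork S$ at $N := \Pi(\widetilde{N}) \cong M$ will follow, completing the construction.

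The hard part, and the step where I expect the argument to genuinely break down with current technology, is exactly the construction of $\Pi$: producing a Nash extension of a blow-down across a transverse slice while controlling the new singularities of $S$ so that $\R^{m'} \cap S = N \cup \{p\}$. This is precisely the ``as yet unproven resolution of singularities theorem for Nash maps'' identified on page~173 of \cite{AK}, and is the same obstruction that already blocks step~II of the classical resolution-tower approach. A fallback route would be to realize $M$, up to semialgebraic homeomorphism, as a transverse preimage $f^{-1}(Z)$ in the hypotheses of Theorem~\ref{thm:a}(ii) by embedding $M$ into a suitable product of grassmannians and spheres, but the existence of such an embedding with the required transversality is essentially equivalent to the same singular-extension problem. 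Consequently, I expect a proof of the conjecture to require a genuinely new algebraic-topological approximation procedure — analogous in spirit to, but strictly beyond, the \emph{ad hoc} family of hypersurfaces used to prove Theorem~\ref{thm:betlee}.
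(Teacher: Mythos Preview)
The statement is labelled \emph{Conjecture} in the paper and is left open there; the paper provides no proof, so there is nothing to compare your argument against. Your proposal is not a proof either, and to your credit you say so explicitly: you outline a strategy (desingularize $M$, build a cobordism for the resolution $\widetilde{M}$, then push it down via an extension $\Pi$ of the blow-down $\pi$) and then correctly identify the step at which it stalls, namely the construction of a Nash extension $\Pi$ with the required control on singularities and on the slice $\R^{m'} \cap S$. Your diagnosis that this obstruction is of the same nature as the unproven resolution-type statement for Nash maps flagged in \cite[p.~173]{AK}, and hence as the obstruction in step~II of the resolution-tower programme discussed in the introduction, is accurate. So as a proof the proposal has a named, fatal gap; as an analysis of \emph{why} the conjecture is hard, it is on target.

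Two smaller remarks. First, your ``nonsingular case'' construction is looser than it looks: gluing the tube $M_\varepsilon$ to a semialgebraic cap $C$ does not in general produce a Nash subset of $\R^{m+1}$, and you would need to say why $S=M_\varepsilon\cup C$ is the zero set of a Nash function. This is moot, however, since by Nash--Tognoli any compact Nash manifold is already Nash isomorphic to a real algebraic set and hence trivially strongly asym-Nash cobordant to one (take $S=M'\times S^1$ as noted after Definition~\ref{def:nash-cobordant}). Second, your fallback route through Theorem~\ref{thm:a}(ii) is, as you suspect, circular: realizing an arbitrary compact Nash set as a transverse preimage $f^{-1}(Z)$ with $f$ satisfying those hypotheses is not known and would already solve the \textsl{algebraization problem}.
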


The reader observes that the validity of this conjecture, combined with Theo\-rem \ref{thm:main}, would imply the complete affirmative solution of the \textsl{algebraization problem} in the compact case: ``\textsl{every compact Nash set has an algebraic stru\-cture}''.

The proofs of our results are given in the next section.


\section{Proofs of the results}

In this section, we give the proofs of our results. We divide the section into four sub\-sections. In the first, we prove Theorem \ref{thm:equiv}. The second subsection is devoted to the proofs of Theorems \ref{thm:betlee} and \ref{thm:main}. The last two subsections deal with Theorems \ref{thm:a} and \ref{thm:b}.


\subsection{Proof of Theorem \ref{thm:equiv}}

Implications $(\mr{ii}) \Longrightarrow (\mr{i})$ and $(\mr{ii}) \Longrightarrow (\mr{iii})$ are evident. Implication $(\mr{iii}) \Longrightarrow (\mr{ii})$ is an easy consequence of the Artin-Mazur theorem.

\begin{proof}[Proof of implication $(\mr{iii}) \Longrightarrow (\mr{ii})$]
Suppose $M$ is asym-Nash cobordant to a point. By definition, there exist a compact Nash subset $S$ of some $\R^{m+1}$ such that $\R^m \cap S$ is equal to the disjoint union of a point $p$ and of a compact Nash set $N$ semialgebraically homeomorphic to $M$, and $\R^m$ is transverse to $S$ in $\R^{m+1}$ locally at $N$. Let $h:\R^{m+1} \lra \R$ be a Nash function having $S$ as its zero set. We apply the Artin-Mazur theorem to $h$ (see \cite[Theorem 8.4.4]{BCR}), obtaining a positive integer $k$, a nonsingular real algebraic subset $V$ of $\R^{m+2+k} = \R^{m+1} \times \R \times \R^k$, a semialgebraically connected component $V'$ of $V$ and a Nash isomorphism $\sigma:\R^{m+1} \lra V'$ such that $\pi(\sigma(x))=x$ and $\tau(\sigma(x))=h(x)$ for every $x \in \R^{m+1}$, where $\pi:\R^{m+1} \times \R \times \R^k \lra \R^{m+1}$ and $\tau:\R^{m+1} \times \R \times \R^k \lra \R$ denote the natural projections. Let $H$ be the coordinate hyperplane $x_{m+1}=0$ of $\R^{m+2+k}$, let $S':=\mr{Zcl}_{\R^{m+2+k}}(\sigma(S))$ and let $g:V \lra \R$ be the restriction of $\tau$ to $V$. Observe that $H \cap \sigma(S)=\sigma(N) \sqcup \{\sigma(p)\}$ and $H$ is transverse to $\sigma(S)$ in $\R^{m+2+k}$ locally at $\sigma(N)$. Moreover, since $\sigma(S)=V' \cap g^{-1}(0)$ is a union of connected components of $g^{-1}(0)$ and $S' \subset g^{-1}(0)$, we infer at once that $\sigma(S)$ is a union of connected components of $S'$ as well. This proves that $M$ is strongly asym-Nash cobordant to the point $\sigma(p)$.
\end{proof}

In order to prove implication $(\mr{i}) \Longrightarrow (\mr{ii})$, our idea is to use a suitable version of the real algebraic blowing down lemma by Akbulut and King, which allows to make algebraic the topological operation of ``collapsing a subspace to a point''. In this context, a key notion is the one of projectively closed real algebraic set. Let $j_n:\R^n \lra \mbb{P}^n(\R)$ be the affine chart sending $x$ into $[1,x]$. A real algebraic set $X \subset \R^n$ is called projectively closed if $j_n(X)$ is Zariski closed in $\mbb{P}^n(\R)$. This condition can be restated in terms of overt polynomials. A polynomial $P \in \R[X_1,\ldots,X_n]$ is overt if its homogeneous leading term (namely, its homogeneous part of maximum degree) vanishes only at the origin of $\R^n$. It is easy to verify that the real algebraic subset $X$ of $\R^n$ is projectively closed if and only if there exists an overt polynomial in $\R[X_1,\ldots,X_n]$ whose zero set coincides with $X$. The reader is referred to  Sections II.3 and II.6 of \cite{AK} for more details on this topic.

\begin{Lem} \label{lem:projectively-closed}
If a compact Nash set is strongly asym-Nash cobordant to a compact real algebraic set, then it is also strongly asym-Nash cobordant to a projectively closed real algebraic set
\end{Lem}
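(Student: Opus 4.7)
The plan is to transport the given strong asym-Nash cobordism via the inverse stereographic projection, which replaces the compact real algebraic set $M'$ by a projectively closed one sitting inside a sphere. Let $(S,N,M')$ be the given strong asym-Nash cobordism with $S\subset\R^{m+1}$, and consider the Nash diffeomorphism
\[
\rho:\R^{m+1}\to S^{m+1}\setminus\{p\}\subset\R^{m+2},\qquad \rho(x)=\frac{(2x,\,|x|^2-1)}{|x|^2+1},
\]
where $p=(0,\ldots,0,1)$. Its key features are: $\rho$ is a biregular isomorphism between the affine variety $\R^{m+1}$ and the Zariski open subset $S^{m+1}\setminus\{p\}$ of the projectively closed variety $S^{m+1}$; it sends the hyperplane $\R^m=\{x_{m+1}=0\}$ onto the equator $S^m\setminus\{p\}$, which lies in $\{y_{m+1}=0\}\subset\R^{m+2}$; and $p$ itself lies on this equatorial hyperplane.

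I then define the new cobordism data by $S'':=\rho(S)\cup\{p\}$, $N'':=\rho(N)$, and $M'':=\rho(M')\cup\{p\}$. Because $S$ is compact, $\rho(S)$ stays Euclidean-bounded away from $p$, so $\{p\}$ is an isolated Euclidean point of $S''$ and $M''$; hence both are compact Nash subsets of $\R^{m+2}$. After a coordinate permutation placing $y_{m+2}$ into position $n+1$ one has $M''\subset\R^{n+1}$, consistent with the dimension constraint of Definition~\ref{def:nash-cobordant}. Conditions $(\mr{i})$ and $(\mr{ii})$ are immediate: $N''$ is semialgebraically homeomorphic to $N$ (hence to $M$) via $\rho$, and $\{y_{m+1}=0\}\cap S''=\rho(\R^m\cap S)\cup\{p\}=N''\cup M''$ with $N''\cap M''=\emptyset$. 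Condition $(\mr{iii})$ follows by pulling tangent spaces back through $\rho$ and using that $S^{m+1}$ is transverse to $\{y_{m+1}=0\}$ in $\R^{m+2}$ at every point of $S^m$. Projective closedness of $M''$ is clear: $M''$ is cut out of the projectively closed variety $S^{m+1}$ by the homogenizations $\hat p_j(y)=(1-y_{m+2})^{\deg p_j}\,p_j(y_1/(1-y_{m+2}),\ldots,y_n/(1-y_{m+2}))$ of the polynomials defining $M'$ together with the linear equations $y_{n+1}=\ldots=y_{m+1}=0$, and any algebraic subset of a projectively closed algebraic set is itself projectively closed (combine a suitably high power of the overt defining polynomial of the ambient set with the additional equations).

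The hardest step is verifying condition $(\mr{iv})$, that $S''$ is a union of connected components of $\mr{Zcl}_{\R^{m+2}}(S'')$. A homogenization argument gives $\mr{Zcl}_{\R^{m+2}}(S'')=\rho(\mr{Zcl}_{\R^{m+1}}(S))\cup\{p\}$, and the subtlety is that if $\mr{Zcl}_{\R^{m+1}}(S)$ is unbounded, then its $\rho$-image may have $p$ as a Euclidean limit point, merging $\{p\}$ with non-compact components of $\rho(\mr{Zcl}(S))$ and spoiling the component-union property inherited from $(S,N,M')$. To overcome this, I would first reduce (via a preliminary algebraic modification supported off $S$, replacing $\mr{Zcl}_{\R^{m+1}}(S)$ by a suitable compact algebraic set still having $S$ as a union of its connected components) to the case in which $\mr{Zcl}_{\R^{m+1}}(S)$ is bounded and hence compact; then $\rho(\mr{Zcl}(S))$ is Euclidean-bounded away from $p$, making $\{p\}$ a genuine isolated component of $\mr{Zcl}(S'')$, and condition $(\mr{iv})$ for $(S'',N'',M'')$ follows directly from the corresponding condition for $(S,N,M')$ by the diffeomorphism property of $\rho$.
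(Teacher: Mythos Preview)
Your approach via stereographic projection is genuinely different from the paper's, and the gap you yourself flag in condition $(\mr{iv})$ is real and not repaired by your proposed fix. You want to ``reduce to the case in which $\mr{Zcl}_{\R^{m+1}}(S)$ is bounded'' by an ``algebraic modification supported off $S$''. But this is impossible while staying inside $\R^{m+1}$: since $S$ is Zariski dense in $V:=\mr{Zcl}_{\R^{m+1}}(S)$, any polynomial vanishing on $S$ vanishes on all of $V$, so no algebraic subset of $V$ strictly smaller than $V$ can contain $S$. Thus you must pass to a higher-dimensional ambient space via some embedding, and the issue is that a biregular embedding $\Phi:\R^{m+1}\hookrightarrow\R^{M+1}$ onto a Zariski-closed image gives $\mr{Zcl}(\Phi(S))=\Phi(V)$, still noncompact, while a compactifying embedding like your $\rho$ reintroduces exactly the boundary point you are trying to control. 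So the ``preliminary modification'' is not a minor technicality; as written, the argument is circular.

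The paper sidesteps this entirely. It invokes Theorem~2.5.13 of \cite{AK}, which furnishes a biregular embedding $\psi:\R^m\hookrightarrow\R^k$ with $\psi(\R^m)$ Zariski closed and $\psi(M')$ projectively closed. Setting $\Psi:=\psi\times\mr{id}_\R:\R^{m+1}\to\R^{k+1}$, one has $\mr{Zcl}_{\R^{k+1}}(\Psi(S))=\Psi(\mr{Zcl}_{\R^{m+1}}(S))$ on the nose, because the image of $\Psi$ is itself Zariski closed --- no point at infinity is acquired. Conditions $(\mr{i})$--$(\mr{iv})$ then transfer mechanically. The moral is that the right tool here is a biregular embedding of the \emph{entire} affine space tailored to make $M'$ projectively closed, rather than a one-point compactification of $\R^{m+1}$; the Akbulut--King result packages exactly this.
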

\begin{proof}
Let $M$ be a compact Nash set strongly asym-Nash cobordant to a compact real algebraic set. By definition, there exist a compact Nash subset $S$ of some $\R^{m+1}$ such that $\R^m \cap S$ is the disjoint union of a compact real algebraic set $M'$ and of a compact Nash set $N$ semialgebraically homeomorphic to $M$, $\R^m$ is transverse to $S$ in $\R^{m+1}$ locally at $N$ and $S$ is the union of certain connected components of $\mr{Zcl}_{\R^{m+1}}(S)$. Thanks to Theorem 2.5.13 of \cite{AK} (or, better, thanks to its proof with $W:=\R^m$ and $V:=M'$), we know that there exists a biregular embedding $\psi:\R^m \lra \R^k$ (namely, $\psi(\R^m)$ is Zariski closed in $\R^k$ and the restriction of $\psi$ from $\R^m$ to $\psi(\R^m)$ is a biregular isomorphism) such that $\psi(M')$ is projectively closed. Denote by $\Psi:\R^{m+1} \lra \R^{k+1}$ the biregular embedding $\psi \times \mi{id}_{\R}$, where $\mi{id}_{\R}:\R \lra \R$ is the identity map on $\R$. The reader observes that $\R^k \cap \Psi(S)=\psi(N) \sqcup \psi(M')$, $\R^k$ is transverse to $\Psi(S)$ in $\R^{k+1}$ locally at $\psi(N)$ and $\Psi(S)$ is a union of connected components of $\mr{Zcl}_{\R^{k+1}}(\Psi(S))=\Psi(\mr{Zcl}_{\R^{m+1}}(S))$. It follows that $M$ is strongly asym-Nash cobordant to the projectively closed real algebraic set $\psi(M')$.
\end{proof}

We are ready to complete the proof of Theorem \ref{thm:equiv}.

\begin{proof}[Proof of implication $(\mr{i}) \Longrightarrow (\mr{ii})$] By Lemma \ref{lem:projectively-closed}, we can assume that $M$ is stron\-gly asym-Nash cobordant to a proje\-ctively closed real algebraic set. Then there exist a compact Nash subset $S$ of some $\R^{m+1}$ such that $\R^m \cap S$ is equal to the disjoint union of a projectively closed real algebraic set $M' \subset \R^m$ and of a compact Nash set $N$ semialgebraically homeomorphic to $M$, $\R^m$ is transverse to $S$ in $\R^{m+1}$ locally at $N$ and $S$ is a union of connected components of $S':=\mr{Zcl}_{\R^{m+1}}(S)$.

Let $D \in \R[X]=\R[X_1,\ldots,X_{m+1}]$ be a (non-zero) polynomial whose zero set is $S'$ and let $d$ be its degree. Since $M'$ is projectively closed as a real algebraic subset of $\R^m$, it is also projectively closed as a real algebraic subset of $\R^{m+1}$: indeed, $j_m(M')=j_{m+1}(M')$ and $\mbb{P}^m(\R)$ is a Zariski closed subset of $\mbb{P}^{m+1}(\R)$, up to natural identifications. In this way, there exists an overt polynomial $E \in \R[X]$ having $M'$ as its zero set. If $M'=\emptyset$, then we define $E(X):=1+\sum_{i=1}^{m+1}X_i^2$. Write $E$ as follows: $E=\sum_{j=0}^eE_j$, where $e:=\deg(E)$ and $E_j$ is a homogeneous polynomial in $\R[X]$ of degree $j$. Since $E$ is overt and not constant, we have that $e \geq 2$ and $E_e$ vanishes only at $0$. Let $\ell$ be a positive integer such that $e\ell>d$.

Denote by $x=(x_1,\ldots,x_{m+1})$ the coordinates of $\R^{m+1}$. Define the polynomial $L \in \R[X,X_{m+2}]$, the polynomial maps $\alpha:\R^{m+1} \lra \R^{m+2}$ and $\beta:\R^{m+2} \lra \R^{m+2}$, and the subsets $S_*$, $S'_*$ and $N_*$ by setting
\begin{align*}
&L(X,X_{m+2}):=D(X)^2+(X_{m+2}-E(X))^{2\ell},\\
&\alpha(x):=(x,E(x)),\\
&\beta(x,x_{m+2}):=(xx_{m+2},x_{m+2}),\\
&S_*:=\{0\} \cup \beta(\alpha(S)),\\
&S'_*:=\{0\} \cup \beta(\alpha(S')),\\
&N_*:=\beta(\alpha(N)).
\end{align*}

Observe that: $\alpha(S') \cap \R^{m+1}=M' \times \{0\}$, the restriction of $\beta \circ \alpha$ from $\R^{m+1} \setminus M'$ to $\R^{m+2} \setminus \R^{m+1}$ is a biregular embedding, $S_*$ is a compact semialgebraic set equal to the union of certain connected components of $S'_*$, $N_*$ is a compact Nash set semialgebraically homeomorphic to $M$ and $L$ is a polynomial of degree $2e\ell$ with $\alpha(S')$ as its zero set and with $E_e^{2\ell}$ as its homogeneous leading term. Moreover, it holds:
\[
\beta(\alpha(S')) \setminus \R^{m+1}=\big\{(x,x_{m+2}) \in \R^{m+2} \setminus \R^{m+1} \, \big| \, L(\beta^{-1}(x,x_{m+2}))=0\big\}.
\]
By clearing denominators in the rational expression of $L(X/X_{m+2},X_{m+2})$, we infer at once the existence of a polynomial $R \in \R[X,X_{m+2}]$ such that
\[
L(\beta^{-1}(x,x_{m+2}))=x_{m+2}^{-2e\ell}\big(E_e(x)^{2\ell}+x_{m+2}R(x,x_{m+2})\big)
\]
for every $(x,x_{m+2}) \in \R^{m+2} \setminus \R^{m+1}$. Since $E_e$ vanishes only at $0$, we infer that $S'_*$ is equal to the zero set of the polynomial $E_e(X)^{2\ell}+X_{m+2}R(X,X_{m+2})$ and hence it is Zariski closed in $\R^{m+2}$. It follows that $S_*$ is a union of connected components of $\mr{Zcl}_{\R^{m+2}}(S_*)$.

Denote by $H$ the coordinate hyperplane $x_{m+1}=0$ of $\R^{m+2}$. It is immediate to verify that $H \cap S_*=N_* \sqcup \{0\}$ and, since $N_* \cap \R^{m+1}=\emptyset$, $H$ is transverse to $S_*$ in $\R^{m+2}$ locally at $N_*$.

Summarizing, we have proved that: $S_*$ is a compact Nash subset of $\R^{m+2}$, $H \cap S_*$ is the disjoint union of $\{0\}$ and of the compact Nash set $N_*$ semialgebraically homeomorphic to $M$, $H$ is transverse to $S_*$ in $\R^{m+2}$ locally at $N_*$ and $S_*$ is a union of connected components of its Zariski closure in $\R^{m+2}$. This ensures that $M$ is strongly asym-Nash cobordant to a point, as desired.
\end{proof}


\subsection{Proofs of Theorems \ref{thm:betlee} and \ref{thm:main}}

We begin with three preliminary lemmas and a corollary.

The first result is a simple consequence of the semialgebraic version of Thom's first isotopy lemma of Coste and Shiota (see \cite{CS}).

\begin{Lem}\label{lem:thom}
Let $A$ be a locally closed semialgebraic subset of $\R^m$ equipped with a semialgebraic Whitney
stratification $\{A_i\}_i$, and let $u:U \lra \R^k$ be a Nash map from an open semialgebraic neighborhood $U$ of $A$ in $\R^m$ to some $\R^k$ such that $0$ is a regular value of the restriction of $u$ to each stratum $A_i$ and $u^{-1}(0) \cap A$ is compact. Choose a compact neighborhood $C$ of $u^{-1}(0) \cap A$ in $U$.

Suppose to have a Nash map $v:U \lra \R^k$ with the following property:
\begin{itemize}
 \item[$(\ast)$] there exists a positive real number $\varepsilon$ such that the set $L=\{(x,s) \in A \times [-\varepsilon,1+\varepsilon] \, | \, (1-s)u(x)+sv(x)=0\}$ is compact and $\pi_A(L) \subset C$, where $\pi_A: A \times \R \lra A$ denotes the natural projection onto $A$.
\end{itemize}

Then, if the Nash map $v$ is also sufficiently $\mscr{C}^1$-close to $u$ on $C$, there exists a semialgebraic homemorphism
$\theta:u^{-1}(0) \cap A \lra v^{-1}(0) \cap A$ such that, for every $i$, $\theta(u^{-1}(0) \cap A_i)=v^{-1}(0) \cap A_i$ and the restriction of $\theta$ from $u^{-1}(0) \cap A_i$ to $v^{-1}(0) \cap A_i$ is a Nash isomorphism.
\end{Lem}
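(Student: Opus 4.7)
The plan is to interpolate between $u$ and $v$ by the straight-line Nash homotopy and apply the semialgebraic Thom's first isotopy lemma of Coste-Shiota \cite{CS} to the second projection of its zero set. Concretely, I set $H:U \times \R \lra \R^k$, $H(x,s):=(1-s)u(x)+sv(x)$, and endow $A \times (-\vep,1+\vep)$ with the product semialgebraic Whitney stratification $\{A_i \times (-\vep,1+\vep)\}_i$. By hypothesis $(\ast)$, the set $L$ coincides with $H^{-1}(0) \cap \big(A \times [-\vep,1+\vep]\big)$, it is compact, and $\pi_A(L) \subset C$.

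The key step is to verify that, when $v$ is sufficiently $\mscr{C}^1$-close to $u$ on $C$, the map $H$ has $0$ as a regular value on each stratum $A_i \times (-\vep,1+\vep)$. The equation $H(x,s)=0$ rewrites as $u(x)=s\,(u(x)-v(x))$, so on $L$ the norm $|u(x)|$ is uniformly bounded by $(1+\vep)\,\|v-u\|_{\mscr{C}^0(C)}$; hence $\pi_A(L)$ is forced into an arbitrarily small neighborhood of the compact set $u^{-1}(0) \cap A$. Since $du|_{A_i}$ has rank $k$ at every point of $u^{-1}(0) \cap A_i$, by continuity the same holds on a neighborhood, and by $\mscr{C}^1$-closeness the convex combination $(1-s)\,du|_{A_i}+s\,dv|_{A_i}$ also has rank $k$ on $L_i:=L \cap (A_i \times (-\vep,1+\vep))$. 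Thus each $L_i$ is a Nash submanifold, and since Whitney stratifications pull back under stratumwise submersions, $\{L_i\}_i$ is a semialgebraic Whitney stratification of $L \cap \big(A \times (-\vep,1+\vep)\big)$.

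To finish, I consider the second projection $\sigma:L \lra [-\vep,1+\vep]$, $(x,s) \mapsto s$. Its restriction to $L \cap \big(A \times (-\vep,1+\vep)\big)$ is proper over $(-\vep,1+\vep)$ (preimages of compact $K \subset (-\vep,1+\vep)$ are closed subsets of the compact set $L$) and, by the regularity just established, a Nash submersion on each stratum $L_i$. Applying the semialgebraic Thom's first isotopy lemma to $\sigma$ over $(-\vep,1+\vep)$ produces a semialgebraic trivialization $\Phi:\sigma^{-1}\big((-\vep,1+\vep)\big) \lra \big(L \cap \sigma^{-1}(0)\big) \times (-\vep,1+\vep)$ over $(-\vep,1+\vep)$ whose restriction to each $L_i$ is a Nash isomorphism. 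Reading $\Phi$ between the slices $s=0$ and $s=1$ and composing with the natural identifications $u^{-1}(0) \cap A \cong L \cap \sigma^{-1}(0)$ and $L \cap \sigma^{-1}(1) \cong v^{-1}(0) \cap A$ yields $\theta$; its stratum-preserving and stratumwise-Nash properties come directly from those of $\Phi$.

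The genuinely delicate point is the uniform regular-value estimate in the second paragraph: it is the only place where the $\mscr{C}^1$-closeness is really used, and the threshold depends on the $\mscr{C}^1$-size of $du|_{A_i}$ on fixed compact neighborhoods of $u^{-1(}0) \cap A_i$. Everything else is a mechanical application of standard semialgebraic and Whitney-stratification machinery.
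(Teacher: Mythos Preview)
Your proposal is correct and follows essentially the same approach as the paper: form the straight-line Nash homotopy, check that its zero set in $A\times(-\vep,1+\vep)$ carries a semialgebraic Whitney stratification on which the projection to the parameter is a proper Nash submersion, and invoke Coste--Shiota. The only cosmetic difference is that the paper precomposes the parameter with a Nash diffeomorphism $\rho:\R\to(-\vep,1+\vep)$ so that the projection is over all of $\R$, whereas you work directly over the open interval and verify properness there; both are equally valid.
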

\begin{proof} Let $\rho:\R \lra \R$ be a Nash embedding and let $a,b \in \R$ such that $\rho(\R)=(-\varepsilon,1+\varepsilon)$, $\rho(a)=0$ and $\rho(b)=1$. For example, one can set
\[
\rho(t):=(1+(1+2\varepsilon)t(1+t^2)^{-1/2})/2.
\]
Consider the Nash map $\varphi:U \times \R \lra \R^k$ defined by setting
\[
\varphi(x,t):=(1-\rho(t))u(x)+\rho(t)v(x).
\]
Define the closed semialgebraic subset $B$ of $U \times \R$, the partition $\{B_i\}_i$ of $B$ and the Nash map $\pi:U \times \R \lra \R$ as follows:
\begin{align*}
&B:=\{(x,t) \in A \times \R \, | \, \varphi(x,t)=0\},\\
&B_i:=\{(x,t) \in A_i \times \R \, | \, \varphi(x,t)=0\} \; \text{ for every $i$},\\
&\pi(x,t):=t.
\end{align*}

By condition $(\ast)$, it follows immediately that the restriction of $\pi$ to $B$ is a proper map. Furthermore, if $v$ is sufficiently $\mscr{C}^1$-close to $u$ on $C$, then $\{B_i\}_i$ is a semialgebraic Whitney stratification of $B$ and the restriction of $\pi$ to each stratum $B_i$ is a Nash submersion. Thanks to the semialgebraic version of Thom's first isotopy lemma \cite{CS}, there exists
a semialgebraic homeomorphism from $\pi^{-1}(a) \cap B=(u^{-1}(0) \cap A) \times \{a\}$ to $\pi^{-1}(b) \cap B=(v^{-1}(0) \cap A) \times \{b\}$, which induces a Nash isomorphism from $\pi^{-1}(a) \cap B_i=(u^{-1}(0) \cap A_i) \times \{a\}$ to $\pi^{-1}(b) \cap B_i=(v^{-1}(0) \cap A_i) \times \{b\}$ for each $i$. This completes the proof.
\end{proof}

The reader observes that, if $A$ is compact, then condition $(\ast)$ is always satisfied (with an arbitrary compact neighborhood $C$ of $A$ in $U$ and with an arbitrary $\varepsilon>0$).

Let us present a useful corollary of the preceding lemma. It is a semialgebraic version of a classical isotopy result of Thom (see \cite[Th\'eor\`eme 2.D.2, pp. 270-271]{Th}). Certainly, such a semialgebraic version is known to the experts in semialgebraic geometry. However, to the best of our knowledge, it is explicitly stated and proved here for the first time.

\begin{Cor} \label{cor:thom}
Let $X \subset \R^n$ be a compact semialgebraic set equipped with a semialgebraic Whitney stratification $\{X_i\}_i$, let $Y$ be a Nash manifold, let $Z$ be a closed semialgebraic subset of $Y$ equipped with a semialgebraic Whitney strati\-fication $\{Z_j\}_j$ and let $f:\Omega \lra Y$ be a Nash map from an open semialgebraic neighborhood $\Omega$ of $X$ in $\R^n$ to $Y$ such that the restriction of $f$ to each stratum $X_i$ is transverse to each stratum $Z_j$ in $Y$. Choose a compact semialgebraic neighborhood $K$ of $X$ in $\Omega$.

Then, if $g:\Omega \lra Y$ is a Nash map sufficiently $\mscr{C}^1$-close to $f$ on $K$, the semialgebraic sets $f^{-1}(Z) \cap X$ and $g^{-1}(Z) \cap X$ are semialgebraically homeomorphic.
\end{Cor}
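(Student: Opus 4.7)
My plan is to mimic the strategy of the proof of Lemma \ref{lem:thom}, replacing the zero set $\{0\}$ of a Nash map by the Whitney stratified closed semialgebraic subset $Z$ and carrying out the interpolation between $f$ and $g$ inside the target Nash manifold $Y$ via a Nash tubular neighborhood.

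First I would fix a closed Nash embedding $Y \hookrightarrow \R^N$ together with a Nash tubular neighborhood $r: W \to Y$, where $W$ is an open semialgebraic neighborhood of $Y$ in $\R^N$. I also fix a Nash diffeomorphism $\rho: \R \to (-\varepsilon, 1+\varepsilon)$ with $\rho(a)=0$ and $\rho(b)=1$, exactly as in the proof of Lemma \ref{lem:thom}. If $g$ is $\mscr{C}^0$-close enough to $f$ on $K$, then for every $x$ in an open semialgebraic neighborhood $\Omega'$ of $X$ in $\Omega$ and every $t \in \R$ the straight line segment joining $f(x)$ and $g(x)$ in $\R^N$ lies in $W$, so the formula
$$H(x,t) := r\bigl((1-\rho(t))\, f(x) + \rho(t)\, g(x)\bigr)$$
defines a Nash map $H: \Omega' \times \R \to Y$ with $H(\cdot, a) = f$ and $H(\cdot, b) = g$.

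Next I would stratify $A := X \times \R$ by $\{X_i \times \R\}_i$ and set $B := H^{-1}(Z) \cap A$, with the induced partition $B_{i,j} := (X_i \times \R) \cap H^{-1}(Z_j)$. Since transversality to a given Nash submanifold is an open condition in the $\mscr{C}^1$ topology on compact sets, for $g$ sufficiently $\mscr{C}^1$-close to $f$ on $K$ the restriction $H|_{X_i \times \R}$ is transverse to each stratum $Z_j$ in $Y$ at every point of $B_{i,j} \cap (X_i \times [a,b])$, for all $i,j$. Consequently $\{B_{i,j}\}$ is a semialgebraic Whitney stratification of an open semialgebraic neighborhood of $B \cap (X \times [a,b])$ in $B$, and a direct tangent-space computation shows that the projection $\pi: A \to \R$, $(x,t) \mapsto t$, restricts to a Nash submersion on each $B_{i,j}$. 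The restriction of $\pi$ to $B \cap (X \times [a,b])$ is proper because $X$ is compact.

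At this point I would apply Coste--Shiota's semialgebraic version of Thom's first isotopy lemma \cite{CS} to this proper stratified submersion, exactly as in the proof of Lemma \ref{lem:thom}. This would produce a semialgebraic homeomorphism between the fibers $\pi^{-1}(a) \cap B = (f^{-1}(Z) \cap X) \times \{a\}$ and $\pi^{-1}(b) \cap B = (g^{-1}(Z) \cap X) \times \{b\}$, and hence the required semialgebraic homeomorphism between $f^{-1}(Z) \cap X$ and $g^{-1}(Z) \cap X$.

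The most delicate part of the argument will be the openness of \emph{stratified} transversality in the semialgebraic setting, together with the verification that $\{B_{i,j}\}$ is really a semialgebraic Whitney stratification: both amount to the fact that transverse preimages of semialgebraic Whitney stratifications are again semialgebraic Whitney stratified, a classical statement in the smooth category that transposes to the semialgebraic/Nash framework by combining uniform continuity of the differential on the compact set $K$ with the local nature of Whitney's conditions $a$ and $b$.
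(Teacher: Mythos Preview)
Your argument is correct, but the paper takes a genuinely different route. Rather than interpolating $f$ and $g$ through the retraction and then pulling back the stratified set $Z$, the paper reduces the corollary directly to Lemma~\ref{lem:thom} via a \emph{graph trick}: it embeds $Y$ in $\R^k$, thickens $Z$ to $Z^*=\rho^{-1}(Z)$ in a Nash tubular neighborhood $\rho:T\to Y$, and then works on the product $A=X\times Z^*$ with the ready-made Whitney stratification $\{X_i\times Z_j^*\}_{i,j}$ and the Nash maps $u(x,y)=f(x)-y$, $v(x,y)=g(x)-y$ into $\R^k$. The transversality hypothesis on $f$ translates verbatim into ``$0$ is a regular value of $u$ on each stratum'', and $u^{-1}(0)\cap A$, $v^{-1}(0)\cap A$ are nothing but the graphs of $f|_{f^{-1}(Z)\cap X}$ and $g|_{g^{-1}(Z)\cap X}$. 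Condition~$(\ast)$ of Lemma~\ref{lem:thom} is then checked by a compactness argument, and the lemma gives the desired semialgebraic homeomorphism.

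The practical difference is this: your approach needs, as you yourself flag, the fact that the transverse preimage $\{B_{i,j}\}$ is again a semialgebraic Whitney stratification, plus a separate check that the projection to $\R$ is a stratumwise submersion (for which you implicitly need the stronger statement that $H(\cdot,t)|_{X_i}$ is transverse to $Z_j$ for each $t$, not merely that $H|_{X_i\times\R}$ is). The paper's graph trick sidesteps both points: the stratification $\{X_i\times Z_j^*\}$ is trivially Whitney as a product, and the submersion/regular-value condition is exactly the hypothesis. In exchange, the paper has to verify the properness condition~$(\ast)$ of Lemma~\ref{lem:thom}, which is automatic in your setup since $X$ is compact. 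Both arguments are sound; the paper's is more self-contained.
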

\begin{proof}
Suppose $Y$ is a Nash submanifold of $\R^k$. Let $\rho:T \lra Y$ be a Nash tubular neighborhood of $Y$ in $\R^k$; namely, $T$ is an open semialgebraic neighborhood of $Y$ in $\R^k$, $\rho$ is a Nash retraction (for example, the closest point map) and $(T,\rho,Y)$ is a vector bundle (see \cite[Corollary 8.9.5]{BCR}). Define $Z^*:=\rho^{-1}(Z)$ and $Z^*_j:=\rho^{-1}(Z_j)$ for every $j$. Since $\rho$ is a Nash submersion, $\{Z^*_j\}_j$ turns out to be a semialgebraic Whitney stratification of the closed semialgebraic subset $Z^*$ of $T$. Let $i_Y:Y \hookrightarrow \R^k$ be the inclusion map and let $F,G:\Omega \lra \R^k$ be the Nash maps defined by $F:=i_Y \circ f$ and $G:=i_Y \circ g$. Consider the open semialgebraic subset $U:=\Omega \times T$ of $\R^{n+k}=\R^n \times \R^k$, the closed semialgebraic subset $A:=X \times Z^*$ of $U$, the semialgebraic Whitney stratification $\{X_i \times Z^*_j\}_{i,j}$ of $A$, and the Nash maps $u,v:U \lra \R^k$ defined as follows: $u(x,y):=F(x)-y$ and $v(x,y):=G(x)-y$ for every $(x,y) \in \Omega \times T=U$.

By hypothesis, the restriction of $f$ to each stratum $X_i$ of $X$ is transverse to each stratum $Z_j$ of $Z$ in $Y$. This is equivalent to assert that the restriction of $F$ to each stratum $X_i$ of $X$ is transverse to each stratum $Z^*_j$ of $Z^*$ in $\R^k$. The latter transversality condition is in turn equivalent to the fact that $0$ is a regular value of the restriction of $u$ to each stratum $X_i \times Z^*_j$ of $A$.

Let $T^*$ be a compact neighborhood of $Y$ in $T$, let $Z^{**}$ be the compact set $Z^* \cap T^*$ and let $C$ be a compact neighborhood of $X \times Z^{**}$ in $K \times T$. Define the continuous map $H:X \times [-1,2] \lra \R^k$ by setting $H(x,s):=(1-s)F(x)+sG(x)$. Since $g$ is arbitrarily $\mscr{C}^1$-close to $f$ on $K$, we have that $G$ is arbitrarily $\mscr{C}^1$-close to $F$ on $K$ as well. In particular, $G$ is also $\mscr{C}^0$-close to $F$ on $X$. In this way, we can assume that $H(X \times [-1,2])$ is a subset of $T^*$. Define the set $L$ as follows:
\[
L:=\big\{(x,y,s) \in A \times [-1,2] \, \big| \, (1-s)u(x,y)+sv(x,y)=0\big\}.
\]
Observe that $(1-s)u(x,y)+sv(x,y)=(1-s)F(x)+sG(x)-y$ for every $(x,y,s) \in U \times [-1,2]$. It follows that $L$ is compact, because it is a closed subset of the compact set $X \times Z^{**} \times [-1,2]$.

We have just proved that $A$, $u$, $v$ and $C$ satisfy the hypothesis of Lemma \ref{lem:thom} (with $\varepsilon=1$). Thanks to this lemma, we infer that $u^{-1}(0) \cap A$ and $v^{-1}(0) \cap A$ are semialgebraically homeomorphic. It remains to show that $u^{-1}(0) \cap A$ and $v^{-1}(0) \cap A$ are semialgebraically homeomorphic to $f^{-1}(Z)$ and to $g^{-1}(Z)$, respectively. This is quite easy. Indeed, we have that
\[
u^{-1}(0) \cap A=\{(x,y) \in X \times Z \, | \, y=f(x)\}
\]
and hence $u^{-1}(0) \cap A$ is the graph of the restriction of $f$ to $f^{-1}(Z) \cap X$. Similarly, $v^{-1}(0) \cap A$ is the graph of the restriction of $g$ to $g^{-1}(Z) \cap X$.
\end{proof}

Fix a positive integer $m$. We denote by $\|x\|$ the euclidean norm $(\sum_{i=1}^mx_i^2)^{1/2}$ of the vector $x=(x_1,\ldots,x_m)$ of $\R^m$, by $B_m(r)$ the open ball $\{x \in \R^m \, | \, \|x\|<r\}$ of $\R^m$ centered at $0$ with radius $r$ and by $\bar{B}_m(r)$ the closure of $B_m(r)$ in $\R^m$.

\begin{Lem} \label{lem:Q}
Let $Q$ be a compact semialgebraic subset of $\R^{m+1}$ such that $Q \cap \R^m=\{0\}$. Then there exist a positive real number $\varepsilon$ and a positive integer $k$ such that
\begin{equation*}
|x_{m+1}|>\varepsilon \, \|x\|^{2k} \text{ for every $(x,x_{m+1}) \in Q \setminus \{0\}$}.
\end{equation*}
\end{Lem}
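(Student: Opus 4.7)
\medskip

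\noindent \textbf{Proof proposal.} The inequality to prove has the classical shape of a Łojasiewicz-type lower bound: the function $(x,x_{m+1}) \mapsto |x_{m+1}|$ vanishes on $Q$ exactly where $(x,x_{m+1})=0$, and we are asked to control how fast it can vanish in terms of $\|x\|$. The natural plan is therefore to apply the semialgebraic Łojasiewicz inequality (see, e.g., \cite[Cor.~2.6.7]{BCR}) to the compact semialgebraic set $Q$.

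Concretely, I would introduce the two continuous semialgebraic functions $f,g : Q \lra \R$ defined by
\[
f(x,x_{m+1}) := x_{m+1}, \qquad g(x,x_{m+1}) := \|x\|,
\]
and verify the hypothesis $f^{-1}(0) \cap Q \subseteq g^{-1}(0) \cap Q$. This is immediate from the assumption $Q \cap \R^m = \{0\}$: a point of $Q$ with $f=0$ lies in $Q \cap \R^m$, hence equals the origin, so $g$ vanishes there as well. The Łojasiewicz inequality then yields a positive integer $N$ and a constant $C>0$ such that
\[
\|x\|^N \leq C \, |x_{m+1}| \qquad \text{for every $(x,x_{m+1}) \in Q$.}
\]

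The remaining step is routine. Since $Q$ is compact there exists $R>0$ with $\|x\| \leq R$ for every $(x,x_{m+1}) \in Q$; pick any integer $k$ with $2k \geq N$ and write
\[
\|x\|^{2k} = \|x\|^{2k-N}\,\|x\|^{N} \leq R^{2k-N}\,\|x\|^N \leq C\, R^{2k-N}\, |x_{m+1}|,
\]
so that $|x_{m+1}| \geq \varepsilon_0 \|x\|^{2k}$ with $\varepsilon_0 := 1/(C\,R^{2k-N})$. Strict inequality on $Q \setminus \{0\}$ is obtained by replacing $\varepsilon_0$ with $\varepsilon := \varepsilon_0/2$: for such a point, either $x \neq 0$ and the bound becomes strict by construction, or $x=0$ and $x_{m+1} \neq 0$, in which case the right-hand side is $0$ and the left-hand side is positive.

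There is essentially no obstacle here: the only point requiring a line of verification is the containment $f^{-1}(0) \cap Q \subseteq g^{-1}(0) \cap Q$, which is built into the hypothesis. The argument is a straightforward, self-contained consequence of the semialgebraic Łojasiewicz inequality, and it is included here presumably because the exponent $2k$ (even) and the specific form $\varepsilon\|x\|^{2k}$ are exactly what is needed in a subsequent step of the main proof.
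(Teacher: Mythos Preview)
Your argument is correct and self-contained: the hypothesis $Q\cap\R^m=\{0\}$ indeed gives the inclusion $f^{-1}(0)\cap Q\subset g^{-1}(0)\cap Q$ for $f=x_{m+1}$ and $g=\|x\|$, the \L{}ojasiewicz inequality from \cite[Cor.~2.6.7]{BCR} applies on the compact semialgebraic set $Q$, and the passage from exponent $N$ to an even exponent $2k$ together with the halving trick for strictness is clean.

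The paper takes a different route. Instead of invoking the \L{}ojasiewicz inequality, it performs the inversion $(x,x_{m+1})\mapsto(x\|x\|^{-2},x_{m+1})$ on $Q\setminus(\{0\}\times\R)$, observes that the image $Q'$ is closed in $\R^{m+1}$ and disjoint from $\R^m$, and then applies the polynomial growth bound of \cite[Prop.~2.6.2]{BCR} to the positive continuous semialgebraic function $1/|x_{m+1}|$ on $Q'$; undoing the inversion yields the desired estimate with an explicit $\varepsilon$. Both approaches rest on the same circle of ideas from \cite[\S2.6]{BCR} (indeed, Cor.~2.6.7 there is derived from Prop.~2.6.2), but yours is the more direct one: you avoid the change of variables and the explicit bookkeeping of constants. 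The paper's argument, on the other hand, is slightly more explicit about the provenance of $\varepsilon$ and $k$.
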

\begin{proof}
Let $\nu:(\R^m \setminus \{0\}) \times \R \lra (\R^m \setminus \{0\}) \times \R$ be the biregular automorphism sending $(x,x_{m+1})$ into $(x\|x\|^{-2},x_{m+1})$
and let $Q':=\nu(Q \setminus (\{0\} \times \R))$. Since $Q$ is compact in $\R^{m+1}$ and $Q \cap \R^m=\{0\}$, it is immediate to verify that $Q'$ is closed in $\R^{m+1}$ and $Q' \cap \R^m=\emptyset$. Consider the positive continuous semialgebraic function $f:Q' \lra \R$ defined by $f(x,x_{m+1}):=1/|x_{m+1}|$ and apply
Proposition 2.6.2 of \cite{BCR} to $f$. We obtain a positive real number $r$ and a positive integer $k$ such that
\[
1/|x_{m+1}|<r(1+\|x\|^2+x_{m+1}^2)^k \; \text{ for every $(x,x_{m+1}) \in Q'$}.
\]
Let $s$ be a positive real number so large that $Q \subset \bar{B}_m(s) \times [-s,s]$. Since $Q' \subset \R^m \times [-s,s]$, we infer that
\[
|x_{m+1}|>r^{-1}(1+\|x\|^2+x_{m+1}^2)^{-k} \geq r^{-1}(1+s^2+\|x\|^2)^{-k}
\]
for every $(x,x_{m+1}) \in Q'$. Define $\varepsilon:=r^{-1}((1+s^2)s^2+1)^{-k}>0$. If $(x,x_{m+1}) \in Q \setminus (\{0\} \times \R)$, then it holds:
\begin{align*}
|x_{m+1}| &>r^{-1}\left(1+s^2+\right\|x\|x\|^{-2}\left\|^2\right)^{-k}=\\
&=r^{-1}\left((1+s^2)\|x\|^2+1\right)^{-k}\|x\|^{2k} \geq \varepsilon \|x\|^{2k},
\end{align*}
as desired.
\end{proof}

For every positive real number $\delta$ and for every positive integer $k$, we define the open semialgebraic subset $\Omega_{\delta,k}$ of $\R^{m+1}$ by setting
\[
\Omega_{\delta,k}:=\big\{(x,x_{m+1}) \in \R^{m+1} \, \big| \, |x_{m+1}|<\delta\|x\|^{2k}\big\},
\]
and we denote its closure in $\R^{m+1}$ by $\overline{\Omega_{\delta,k}}$.

In the next lemma, we improve the geometric conditions in which a strongly asymmetric Nash cobordism to a point can be performed.

\begin{Lem} \label{lem:wallace-type}
Let $M$ be a compact Nash set, strongly asym-Nash cobordant to a point. Then there exist a compact Nash subset $S$ of some $\R^{m+1}$, a Nash subset $N$ of $S$ semialgebraically homeomorphic to $M$, a compact semialgebraic neighborhood $W$ of $N$ in $S$, a positive real number $\delta$ and a positive integer $k$ such that: $0 \not\in N$, $\R^m \cap S=N \cup \{0\}$, $\R^m$ is transverse to $W$ in $\R^{m+1}$, $0$ is an isolated point of $S \cap \overline{\Omega_{\delta,k}}$, $W=(S \cap \overline{\Omega_{\delta,k}} \, ) \setminus \{0\}$ and
\begin{align}
&S \subset B_m(1/2) \times (-1/2,1/2), \label{eq:s}\\
&\mr{Zcl}_{\R^{m+1}}(S) \setminus S \subset \R^{m+1} \setminus (\bar{B}_{m}(2) \times [-2,2]). \label{eq:zcl}
\end{align}
\end{Lem}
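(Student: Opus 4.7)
My plan starts from the data of a strong asymmetric Nash cobordism of $M$ to a point given by Definition \ref{def:nash-cobordant}: a compact Nash subset $\widetilde{S}\subset\R^{\widetilde{m}+1}$, a Nash subset $\widetilde{N}\subset\widetilde{S}$ semialgebraically homeomorphic to $M$, and a point $\widetilde{p}\in\R^{\widetilde{m}}$ fulfilling the four defining conditions. I plan to transform this data in three phases. In the first, I translate $\R^{\widetilde{m}+1}$ so that $\widetilde{p}$ becomes the origin $0\in\R^{\widetilde{m}}$, and then apply the biregular dilation $x\mapsto\lambda x$ of $\R^{\widetilde{m}+1}$ for a small $\lambda>0$; both moves commute with $\mr{Zcl}$ and preserve the ``union of connected components'' property, and for $\lambda$ small enough the scaled set fits inside $B_{\widetilde{m}}(1/2)\times(-1/2,1/2)$, yielding \eqref{eq:s}.

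In the second phase I aim at \eqref{eq:zcl}. Let $A:=\mr{Zcl}(\widetilde{S})\setminus\widetilde{S}$, a closed semialgebraic set disjoint from the compact $\widetilde{S}$ and consisting of the remaining connected components of the Zariski closure. My idea is to enlarge the ambient space from $\R^{\widetilde{m}+1}$ to some $\R^{m+1}$ via a biregular embedding built along the lines of the Artin--Mazur argument of the proof of $(\mathrm{iii})\Rightarrow(\mathrm{ii})$ of Theorem \ref{thm:equiv}, applied to a Nash function on $\R^{\widetilde{m}+1}$ whose zero set is $\widetilde{S}$: the embedded copy of $\widetilde{S}$ is to land inside a single connected component $V'$ of a nonsingular real algebraic set $V\subset\R^{m+1}$, and any extra component of the resulting Zariski closure is to live inside $V\setminus V'$. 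A subsequent biregular dilation in the new ambient then shrinks the embedded $S$ into $B_m(1/2)\times(-1/2,1/2)$ while pushing the other components of $V$, which remain at positive distance from $V'$, outside $\bar{B}_m(2)\times[-2,2]$. The embedding will be chosen so that the new cutting hyperplane is compatible with the old one, keeping $\R^m\cap S=N\cup\{0\}$ and the local transversality at $N$ intact.

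In the third phase I invoke Lemma \ref{lem:Q}. Since $N$ is compact and $0\notin N$, the distance $d(0,N)$ is positive; setting $r:=d(0,N)/2$ and $Q:=S\cap\bar{B}_{m+1}(r)$ gives a compact semialgebraic subset of $\R^{m+1}$ with $Q\cap\R^m=\{0\}$, and Lemma \ref{lem:Q} yields $\delta>0$ and $k\in\N^+$ with $|x_{m+1}|>\delta\|x\|^{2k}$ on $Q\setminus\{0\}$. I then define $W:=(S\cap\overline{\Omega_{\delta,k}})\setminus\{0\}$. By the inequality above, no point of $S\cap\bar{B}_{m+1}(r)$ other than $0$ lies in $\overline{\Omega_{\delta,k}}$, so $0$ is isolated in $S\cap\overline{\Omega_{\delta,k}}$ and $W$ is compact as the closed subset of a compact set obtained by removing an isolated point. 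Since $N\subset\overline{\Omega_{\delta,k}}$ (on $N$ one has $x_{m+1}=0$ and $\|x\|$ bounded below by $d(0,N)>0$) and nearby points of $S$ have small $|x_{m+1}|$ by transversality, $W$ is a neighborhood of $N$ in $S$; shrinking $\delta$ further if needed places $W$ inside the open neighborhood of $N$ in $S$ on which $\R^m$ is transverse to $S$, giving the transversality of $\R^m$ to $W$ in $\R^{m+1}$.

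The main obstacle is the second phase. Naive rescaling in $\R^{\widetilde{m}+1}$ alone cannot achieve \eqref{eq:zcl}, because $A$ can contain points arbitrarily close to the origin and uniform scaling compresses both $\widetilde{S}$ and $A$ together; moreover, the equality $I(\widetilde{S})=I(\mr{Zcl}(\widetilde{S}))$ prevents any separation of $\widetilde{S}$ from $A$ by polynomials in the original ambient. Hence a genuine re-embedding into a larger ambient space, decoupling $\widetilde{S}$ from $A$ at the algebraic level, seems indispensable, and the Artin--Mazur construction used in $(\mathrm{iii})\Rightarrow(\mathrm{ii})$ of Theorem \ref{thm:equiv} is the natural vehicle.
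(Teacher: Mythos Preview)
Your first and third phases are sound. In particular, your application of Lemma~\ref{lem:Q} to $Q:=S\cap\bar{B}_{m+1}(r)$ (rather than the paper's $Q:=S\setminus W^\bullet$) is an acceptable variant, and your argument that shrinking $\delta$ forces $W\subset W^\bullet$ by compactness is correct; the required stratification of an open neighborhood of $N$ in $W$ is then simply $\{W^\bullet_i\cap\Omega_{\delta,k}\}_i$.

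The genuine gap is in the second phase. After the Artin--Mazur embedding you correctly observe that the extra components of the new Zariski closure lie in $V\setminus V'$, at some positive distance $d$ from the compact set $\sigma(\widetilde{S})$; but you have no control over the ratio of $d$ to the diameter of $\sigma(\widetilde{S})$. A biregular dilation $x\mapsto\lambda x$ rescales \emph{both} sets by the same factor, so it cannot simultaneously shrink $\sigma(\widetilde{S})$ into $B_m(1/2)\times(-1/2,1/2)$ and push the extras outside $\bar{B}_m(2)\times[-2,2]$ unless that ratio is already large, which Artin--Mazur does not guarantee. In other words, your own diagnosis that ``uniform scaling compresses both $\widetilde{S}$ and $A$ together'' applies verbatim to the post--Artin--Mazur situation. (Incidentally, $A$ cannot contain points arbitrarily close to the origin: $A$ is closed, $\widetilde{S}$ is compact, and they are disjoint.) Your further remark that $I(\widetilde{S})=I(\mr{Zcl}(\widetilde{S}))$ blocks polynomial separation is a red herring: one does not need a polynomial \emph{vanishing} on $\widetilde{S}$ and not on $A$, only one that is \emph{small} on $\widetilde{S}$ and \emph{large} on the part of $A$ near $\widetilde{S}$. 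This is exactly Wallace's classical device, and it is what the paper uses: take a continuous function equal to $0$ on $\widetilde{S}$ and to $4$ on $A$, approximate it on a large ball by a polynomial $P$ via Weierstrass (arranging also $P(0)=0$), and embed $\R^{\widetilde{m}+1}\hookrightarrow\R^{\widetilde{m}+2}$ biregularly via $x\mapsto(x,P(x))$. This raises the ambient dimension by one, keeps the image of $\widetilde{S}$ inside a ball of radius $\sqrt{2}/4$, and throws the image of $A$ outside $\bar{B}(3)$ by construction---no further dilation is required. A final linear coordinate swap realigns the cutting hyperplane with~$\R^m$.
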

\begin{proof}
Since $M$ is strongly asym-Nash cobordant to a point, there exists a compact Nash subset $T$ of some $\R^{n+1}$, a compact Nash subset $Z$ of $T$ semialgebraically homeomorphic to $M$, an open semialgebraic neighborhood $U$ of $Z$ in $T$ equipped with a semialgebraic Whitney stratification $\{U_i\}_i$ and a point $q \in T \setminus Z$ such that $\R^n \cap T=Z \cup \{q\}$, $\R^n$ is transverse to each $U_i$ in $\R^{n+1}$ and $T$ is a union of connected components of $\mr{Zcl}_{\R^{n+1}}(T)$. Moreover, by using an affinity of $\R^{n+1}$ if necessary, we can assume that $q=0$ and $T \subset \bar{B}_{n+1}(1/4)$.

Let us move away $L:=\mr{Zcl}_{\R^{n+1}}(T) \setminus T$ from $T$ by using a classical method of A. H. Wallace (see \cite[Subsection 3.2]{wa}). Since $T$ and $L$ are disjoint closed subsets of $\R^{n+1}$, there exists a continuous function $f:\R^{n+1} \lra \R$ such that $f$ vanishes on $T$ and is constantly equal to $4$ on $L$. Apply the Weierstrass approximation theorem to the restriction of $f$ to $\bar{B}_{n+1}(3)$. We obtain a polynomial $P \in \R[X_1,\ldots,X_{n+1}]$ such that $|P| \leq 1/4$ on $T$, $P(0)=0$ and $P>3$ on $L \cap \bar{B}_{n+1}(3)$. Consider the biregular embedding $G:\R^{n+1} \lra \R^{n+2}$ sending $x \in \R^{n+1}$ into $(x,P(x)) \in \R^{n+1} \times \R=\R^{n+2}$. Define $S':=G(T)$, $N':=G(Z)$, $W':=G(U)$, $W'_i:=G(U_i)$ for every $i$, $m:=n+1$ and $H:=\{(x_1,\ldots,x_{m+1}) \in \R^{m+1} \, | \, x_m=0\}$. We have that $S'$ is a compact Nash subset of $\R^{m+1}$ contained in $\bar{B}_{m+1}(\sqrt{2}/4) \subset B_{m+1}(1/2)$, $H \cap S'$ is the disjoint union of $\{0\}$ and of the compact Nash set $N'$ semialgebraically homeomorphic to $M$, $\{W'_i\}_i$ is a semialgebraic Whitney stratification of the open semialgebraic neighborhood $W'$ of $N'$ in $S'$, $H$ is transverse to each $W'_i$ in $\R^{m+1}$ and
\[
\mr{Zcl}_{\R^{m+1}}(S') \setminus S'=G(L) \subset \R^{m+1} \setminus (\bar{B}_m(3) \times [-3,3]) \subset \R^{m+1} \setminus \bar{B}_{m+1}(3).
\]

Let $\Theta:\R^{m+1} \lra \R^{m+1}$ be the linear change of coordinates of $\R^{m+1}$, sending $(x_1,\ldots,x_{m-1},x_m,x_{m+1})$ into $(x_1,\ldots,x_{m-1},x_{m+1},x_m)$. Define $S:=\Theta(S')$, $N:=\Theta(N')$, $W^\bullet:=\Theta(W')$ and $W^\bullet_i:=\Theta(W'_i)$ for every $i$. The reader observes that $\Theta(H)=\R^m$,
\begin{align*}
& S \subset B_{m+1}(1/2)=\Theta(B_{m+1}(1/2)) \subset B_m(1/2) \times (-1/2,1/2),\\
& \bar{B}_m(2) \times [-2,2] \subset \bar{B}_{m+1}(2\sqrt{2}) \subset \bar{B}_{m+1}(3)=\Theta(\bar{B}_{m+1}(3))
\end{align*}
and hence
\[
\mr{Zcl}_{\R^{m+1}}(S) \setminus S \subset \R^{m+1} \setminus (\bar{B}_m(2) \times [-2,2]).
\]

In order to complete the proof, we must modify $W^\bullet$ (and its semialgebraic Whitney stratification $\{W^\bullet_i\}_i$) in such a way that it remains transverse to $\R^m$ in $\R^{m+1}$, but it assumes the required form. Restrict $W^\bullet$ around $N$ in such a way that
\begin{equation} \label{eq:3.5}
\text{$Q:=S \setminus W^\bullet$ is a neighborhood of $0$ in $S$.}
\end{equation}
Since $Q \cap \R^m=\{0\}$, by Lemma \ref{lem:Q}, there exists a positive real number $\varepsilon$ and a positive integer $k$ such that
\begin{equation} \label{eq:estimate}
\text{$|x_{m+1}|>\varepsilon\|x\|^{2k} \,$ for every $(x,x_{m+1}) \in Q \setminus \{0\}$.}
\end{equation}
For every $\delta \in (0,\varepsilon)$, define:
\begin{align*}
&W^*_{\delta}:=S \cap \overline{\Omega_{\delta,k}},\\
&W_{\delta}:=W^*_{\delta} \setminus \{0\},\\
&W_{\delta}^i:=W^\bullet_i \cap \Omega_{\delta,k} \; \text{ for every $i$},\\
&W^{i,\partial}_{\delta}:=W^\bullet_i \cap \partial\Omega_{\delta,k} \; \text{ for every $i$,}
\end{align*}
where $\partial\Omega_{\delta,k}$ denotes the boundary of $\Omega_{\delta,k}$ in $\R^{m+1}$. Thanks to \eqref{eq:estimate}, $W^*_{\delta}$ turns out to be a compact semialgebraic neighborhood of $N$ in $S$ contained in $W^\bullet \cup \{0\}$. By \eqref{eq:3.5}, $0$ is an isolated point of $W^*_{\delta}$. It follows at once that $W_{\delta}$ is a compact semialgebraic neighborhood of $N$ in $S$ contained in $W^\bullet$. Bearing in mind that $\R^m$ is transverse to each $W^\bullet_i$ in $\R^{m+1}$, if we choose $\delta$ sufficiently small, we have that $\partial\Omega_{\delta,k}$ is transverse to each $W^\bullet_i$ in $\R^{m+1}$ as well. It follows that the Nash submanifolds $\{W_{\delta}^i\}_i \cup \{W^{i,\partial}_{\delta}\}_i$ of $\R^{m+1}$ form a semialgebraic Whitney stratification of $W_{\delta}$, whose elements are transverse to $\R^m$ in $\R^{m+1}$.

By construction, $S$, $N$ and $W:=W_\delta$ have all the required properties.
\end{proof}

Theorem \ref{thm:main} is an immediate consequence of Theorem \ref{thm:equiv} proved in the preceding subsection and of Theorem \ref{thm:betlee} we prove below.

\begin{proof}[Proof of Theorem \ref{thm:betlee}]
Let $M$, $S \subset \R^{m+1}$, $N$, $W$, $\delta$ and $k$ be as in the statement of Lemma \ref{lem:wallace-type}. For every positive integer $h$, define the nonsingular real algebraic subset $E_h$ of $\R^{m+1}$ and the Nash function $r_h:B_m(1/2) \lra \R$ by setting
\begin{align*}
&E_h:=\{(x,x_{m+1}) \in \R^{m+1} \, | \, (x_{m+1}-1)^2+\|x\|^{2h}-1=0\},\\
&r_h(x):=1-(1-\|x\|^{2h})^{1/2},
\end{align*}
and denote by $\GG(r_h) \subset \R^{m+1}$ the graph of $r_h$. Since it holds
\begin{align*}
&E_h \subset \bar{B}_m(1) \times [0,2] \subset \bar{B}_m(2) \times [-2,2],\\
&E_h \cap (B_m(1/2) \times (-1/2,1/2))=\GG(r_h) \cap (B_m(1/2) \times (-1/2,1/2)),
\end{align*}
points \eqref{eq:s} and \eqref{eq:zcl} imply that
\[
\GG(r_h) \cap S=E_h \cap S=E_h \cap \mr{Zcl}_{\R^{m+1}}(S)
\]
and hence $\GG(r_h) \cap S$ is Zariski closed in $\R^{m+1}$.

Choose an integer $h_0>k$ such that $4^{-h_0+k}<\delta$. For every $h \geq h_0$ and for every $x \in \bar{B}_m(1/2) \setminus \{0\}$, we have:
\begin{align*}
r_h(x) &=\|x\|^{2h}/(1+(1-\|x\|^{2h})^{1/2}) \leq \|x\|^{2h}=\|x\|^{2k}\|x\|^{2h-2k} \leq \\
& \leq \|x\|^{2k}(1/2)^{2h-2k} \leq \|x\|^{2k}4^{-h_0+k} < \delta \|x\|^{2k}.
\end{align*}
Bearing in mind that $0$ is an isolated point of $S \cap \overline{\Omega_{\delta,k}}$ and $W$ is equal to $(S \cap \overline{\Omega_{\delta,k}}) \setminus \{0\}$, we infer at once that $0$ is an isolated point of $\GG(r_h) \cap S$ and $(\GG(r_h) \cap S) \setminus \{0\}=\GG(r_h) \cap W$ for every $h \geq h_0$. It follows that $\GG(r_h) \cap W$ is biregularly isomorphic to a real algebraic set for every $h \geq h_0$.

Fix $h \geq h_0$. Consider the Nash functions $f,g_h:B_m(1/2) \times \R \lra \R$ defined by setting
\[
\text{$f(x,x_{m+1}):=x_{m+1} \;$
and
$\; g_h(x,x_{m+1}):=x_{m+1}-r_h(x)$.}
\]

If $h$ is sufficiently large, $g_h$ is arbitrarily $\mscr{C}^1$-close to $f$ locally at $W$. In this way, by Lemma \ref{lem:thom}, we infer that $N=f^{-1}(0) \cap W$ (and hence $M$) is semi\-algebraically homeomorphic to the real algebraic set $\GG(r_h) \cap W=(g_h)^{-1}(0) \cap W$.

This completes the proof.
\end{proof}


\subsection{Proof of Theorem \ref{thm:a}}

In order to prove point $(\mr{i})$, we need a new version of an important algebraic approximation result, the so-called workhorse theorem of Akbulut and King: see Lemma 2.4 of \cite{akis} or Theorem 2.8.3 of \cite[p.~63]{AK}.

\begin{Lem} \label{lem:approx}
Let $X \subset \R^n$ be a compact real algebraic set, let $Y \subset \R^k$ be a nonsingular real algebraic set, let $H:X \times S^1 \lra Y$ be a Nash map and let $b \in S^1$ such that the restriction of $H$ to $X \times \{b\}$ is a regular map. Choose a point $a$ in $S^1 \setminus \{b\}$ and a compact semialgebraic neighborhood $I_a$ of $a$ in $S^1 \setminus \{b\}$. Then there exist an open semialgebraic neighborhood $U$ of $X$ in $\R^n$, a Nash extension $\widetilde{H}:U \times S^1 \lra Y$ of $H$, a compact semialgebraic neighborhood $U^*$ of $X$ in $U$, a nonsingular real algebraic set $L \subset \R^n \times S^1 \times \R^k$, an open semialgebraic subset $L_0$ of $L$
and a regular map $R:L \lra Y$ with the following properties:
\begin{itemize}
 \item[$(\mr{i})$] If $\pi:\R^n \times S^1 \times \R^k \lra \R^n \times S^1$ denotes the natural projection, then $\pi(L_0)=U \times S^1$ and the restriction $\varpi:L_0 \lra U \times S^1$ of $\pi$ from $L_0$ to $U \times S^1$ is a Nash isomorphism.
 \item[$(\mr{ii})$] $X \times \{b\} \times \{0\} \subset L_0$.
 \item[$(\mr{iii})$] $R \circ i_{L_0} \circ \varpi^{-1}$ is arbitrarily $\mscr{C}^1$-close to $\widetilde{H}$ on $U^* \times I_a$, and $R \circ i_{L_0} \circ \varpi^{-1}=\widetilde{H}$ on $X \times \{b\}$, where $i_{L_0}:L_0 \hookrightarrow L$ denotes the inclusion map.
\end{itemize}
\end{Lem}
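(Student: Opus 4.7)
The strategy is to adapt the Akbulut--King workhorse theorem (\cite[Lemma 2.4]{akis}, \cite[Theorem 2.8.3]{AK}) via the Artin--Mazur theorem (\cite[Theorem 8.4.4]{BCR}), used in a \emph{relative} form that is designed to produce (ii) automatically. The key idea is to pass from $\widetilde H$ to its deviation from a polynomial extension of $H(\,\cdot\,,b)$, which vanishes exactly on $X\times\{b\}$; the Artin--Mazur graph of this deviation will then contain $X\times\{b\}\times\{0\}$, which is the position required in (ii) for $L_0$.

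To implement this, first extend $H$ to a Nash map $\widetilde H\colon U\times S^1\lra Y$ on an open semialgebraic neighborhood $U$ of $X$ in $\R^n$ by standard Nash extension results, and fix a compact semialgebraic neighborhood $U^*$ of $X$ in $U$. Using the hypothesis that $H(\,\cdot\,,b)$ is regular, choose a polynomial map $P\colon\R^n\lra\R^k$ restricting to $H(\,\cdot\,,b)$ on $X$, and form the Nash map $\Phi\colon U\times S^1\lra\R^k$ by $\Phi(u,s):=\widetilde H(u,s)-P(u)$, so that $\Phi(x,b)=0$ for every $x\in X$. Applying a parametric vector-valued form of Artin--Mazur to $\Phi$ produces a nonsingular real algebraic set $L\subset\R^n\times S^1\times\R^k$ and an open semialgebraic subset $L_0$ on which the projection $\pi$ induces a Nash isomorphism $\varpi\colon L_0\lra U\times S^1$ with $\varpi^{-1}(x,b)=(x,b,0)$ for every $x\in X$; this gives (i) and (ii). Define the map $\widetilde R$ on $\R^n\times S^1\times\R^k$ by $\widetilde R(u,s,v):=P(u)+v$, so that $\widetilde R\circ\varpi^{-1}=\widetilde H$ on $U\times S^1$, realizing in particular the exact agreement on $X\times\{b\}$ required in (iii).

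The main obstacle is to force $\widetilde R$ to land in $Y$, not merely in $\R^k$: on $L_0$ this is automatic since $\widetilde H$ takes values in $Y$, but not on all of $L$. The fix is to replace $L$ by a suitable nonsingular algebraic subvariety still containing an open semialgebraic set that projects Nash-isomorphically onto $U\times S^1$ and on which (a perturbation of) $\widetilde R$ maps into $Y$; this is achieved using nonsingularity of $Y$ in $\R^k$ together with a transversality argument. Here is where the asymmetry in (iii) between $I_a$ and $b$ is crucial: the clause ``$\mscr{C}^1$-close on $U^*\times I_a$'' accommodates a small algebraic perturbation of $\widetilde R$ (or of $L$) performed away from $X\times\{b\}$, while the clause ``equal on $X\times\{b\}$'' is preserved because the perturbation is arranged to vanish at the points $(x,b,0)$ where $\Phi=0$. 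This combination of Artin--Mazur, the relative reduction removing the already-regular values at $b$, and a transverse perturbation keeping $X\times\{b\}\times\{0\}$ fixed, is the ``nonstandard'' twist alluded to in Remark \ref{rem:nonstandard}.
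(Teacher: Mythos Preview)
Your proposal has a genuine gap, and it is exactly the step you flag as ``the main obstacle.'' You produce a polynomial map $\widetilde R(u,s,v)=P(u)+v$ which takes values in $Y$ only on $L_0$, and then you assert that one can ``replace $L$ by a suitable nonsingular algebraic subvariety \dots\ on which (a perturbation of) $\widetilde R$ maps into $Y$'' via ``a transversality argument.'' This is not a proof: passing to the Zariski closure of $L_0$ may destroy nonsingularity, and perturbing $\widetilde R$ into $Y$ while keeping it \emph{regular} is precisely the content one must supply. There is also a secondary problem: the Artin--Mazur theorem does not output a variety in $\R^n\times S^1\times\R^k$; it adds auxiliary coordinates, so your $\varpi^{-1}(x,b)$ has the form $(x,b,0,\psi(x,b))$ with an a priori nonzero tail, and condition~(ii) as stated does not follow.

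The paper's argument avoids both issues by reversing the order of the two operations you perform. Instead of first lifting the Nash map to an algebraic graph and then trying to push the values into $Y$, it first builds a single \emph{regular} map $g:\R^n\times S^1\to\R^k$ with the required relative behaviour, and only then runs the standard normal-bundle (workhorse) construction $L=\eta^{-1}(\mc{Y})$, $R(x,p,v)=g(x,p)+v$, which lands in $Y$ by design. The map $g$ is obtained as the convex combination $g=\theta\cdot G+(1-\theta)\cdot Q$, where $Q$ is a regular extension of $H(\cdot,b)$ (your $P$), $G$ is a Weierstrass polynomial approximation of $\varrho\circ\widehat H$, and $\theta:S^1\to\R$ is a regular function with $\theta(b)=0$ and $\theta$ close to $1$ on $I_a$. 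This $\theta$ is what makes the construction asymmetric: near $a$ one has $g\approx G\approx \widetilde H$, giving the $\mscr{C}^1$-closeness in (iii); at $b$ one has $g=Q$, so $g(x,b)=H(x,b)\in Y$, whence $\beta(x,b)=\varrho(g(x,b))-g(x,b)=0$ and $(x,b,0)\in L_0$, giving (ii) with $L$ sitting in the correct ambient space $\R^n\times S^1\times\R^k$. The only delicate estimate is showing $g(\overline U\times S^1)$ stays in the tubular neighbourhood $T$ of $Y$, which is handled by elementary bounds on $|\theta|$ and $\|G-H^*\|$, $\|Q-H^*\|$ on the two regions $I_b$ and $S^1\setminus I_b$. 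In short, the missing idea in your proposal is this regular interpolation via the cut-off $\theta$ on $S^1$; once $g$ is in hand, the rest is the classical workhorse construction and no Artin--Mazur step is needed.
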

\begin{proof}
We organize the proof into two steps.

\textit{Step I.} Let $i_Y:Y \hookrightarrow \R^k$ be the inclusion map and let $q:X \times \{b\} \lra \R^n$ be the regular map sending $(x,b)$ into $H(x,b)$ for every $x \in X$. Choose a Nash extension $\widehat{H}:\R^n \times S^1 \lra \R^k$ of $i_Y \circ H$ and a regular extension $Q:\R^n \times S^1 \lra \R^k$ of $q$. The existence of $\widehat{H}$ is ensured by Efroymson's extension theorem (see \cite[Theorem 8.9.12]{BCR}), the existence of $Q$ by Proposition 3.2.3 of \cite{BCR}.

Let $\varrho:T \lra Y$ be a Nash tubular neighborhood of $Y$ in $\R^k$ with $\varrho$ equal to the closest point map (see \cite[Corollary 8.9.5]{BCR}) and let $U$ be an open semialgebraic neighborhood of $X$ in $\R^n$ with compact closure $\overline{U}$ in $\R^n$ such that $\widehat{H}(\overline{U} \times S^1) \subset T$. Define the Nash map $H^*:\overline{U} \times S^1 \lra \R^k$ by setting $H^*(x,p):=\varrho(\widehat{H}(x,p))$. We remark that the image of $H^*$ is contained in $Y$.

Define $\delta,\Delta \in \R$ as follows:
\begin{align*}
&\delta:=\mr{dist}\big(H^*(\overline{U} \times S^1),\R^k \setminus T\big)>0,\\
&
\textstyle
\Delta:=\max\big\{\sup_{\overline{U} \times S^1}\|H^*\|_k,\sup_{\overline{U} \times S^1}\|Q\|_k\big\},
\end{align*}
where $\|\cdot\|_k$ denotes the usual euclidean norm of $\R^k$. Since $H^*(x,b)=q(x,b)=Q(x,b)$ for every $x \in X$, by restricting $U$ around $X$ if necessary, we can find a compact semialgebraic neighborhood $I_b$ of $b$ in $S^1$ so small that $I_a \cap I_b=\emptyset$ and
\begin{equation} \label{eq:H*Q}
\textstyle
\sup_{\overline{U} \times I_b}\|H^*-Q\|_k<\delta/5.
\end{equation}

Consider a $\mscr{C}^1$-function $\xi:S^1 \lra \R$ such that $\xi=1$ on $S^1 \setminus I_b$, $\xi(b)=0$ and $|\xi| \leq 1$ on the whole $S^1$. By the relative Weierstrass approximation theorem (see \cite[Lemma 2.8.1]{AK}), there exists a regular function $\theta:S^1 \lra \R$ arbitrarily $\mscr{C}^1$-close to $\xi$ such that $\theta(a)=1$ and $\theta(b)=0$. Furthermore, we can assume that
\begin{equation} \label{eq:2}
\text{$|\theta|<2$ on $S^1$}
\end{equation}
and
\begin{equation} \label{eq:Delta}
\textstyle
\text{$\big(\sup_{S^1 \setminus I_b}|1-\theta|\big) \cdot \Delta<\delta/5$}.
\end{equation}

By using again the Weierstrass approximation theorem, we find a regular map $G:\R^n \times S^1 \lra \R^k$ arbitrarily $\mscr{C}^1$-close to $H^*$ on $\overline{U} \times S^1$. In particular, we can suppose that
\begin{equation} \label{eq:GH*}
\textstyle
\sup_{\overline{U} \times S^1}\|G-H^*\|_k<\delta/5.
\end{equation}
Define the regular map $g:\R^n \times S^1 \lra \R^k$ as follows:
\[
g:=\theta \cdot G+(1-\theta) \cdot Q.
\]
Observe that
\begin{equation} \label{eq:gH*}
g-H^*=\theta \cdot (G-H^*)+(1-\theta) \cdot (Q-H^*).
\end{equation}
By construction, $G$ is arbitrarily $\mscr{C}^1$-close to $H^*$ on $\overline{U} \times S^1$ and $\theta$ is arbitrarily $\mscr{C}^1$-close to the function constantly equal to $1$ on $I_a$. Thanks to \eqref{eq:gH*}, it follows that
\begin{equation} \label{eq:g}
\text{$g$ is arbitrarily $\mscr{C}^1$-close to $H^*$ on $\overline{U} \times I_a$.}
\end{equation}
Since $\theta(b)=0$, we have that $g(x,b)=q(x,b) \in Y \;$ for every $x \in X$ and hence
\begin{equation} \label{eq:b}
g(X \times \{b\}) \subset Y.
\end{equation}

Let us show that
\begin{equation} \label{eq:tubular}
\text{$g(\overline{U} \times S^1) \subset T$.}
\end{equation}
To do this, it suffices to prove that $\|g-H^*\|_k<\delta$ on $\overline{U} \times S^1$.

Consider a point $x'=(x,p)$ in $\overline{U} \times I_b$. By \eqref{eq:gH*}, \eqref{eq:H*Q}, \eqref{eq:2} and \eqref{eq:GH*}, we have:
\[
\|g(x')-H^*(x')\|_k <|\theta(p)| \cdot \delta/5+(1+|\theta(p)|) \cdot \delta/5<2\delta/5+3 \delta/5=\delta.
\]
Suppose now that $x'=(x,p) \in \overline{U} \times (S^1 \setminus I_b)$. By \eqref{eq:gH*}, \eqref{eq:2}, \eqref{eq:Delta} and \eqref{eq:GH*}, we have:
\[
\|g(x')-H^*(x')\|_k <|\theta(p)| \cdot \delta/5+|1-\theta(p)| \cdot (2\Delta)<2\delta/5+2 \delta/5<\delta.
\]
This proves \eqref{eq:tubular}. We conclude this step by choosing a compact semialgebraic neighborhood $U^*$ of $X$ in $U$.

\textit{Step II.} The remainder of the proof is quite standard: it follows the classical proof of the workhorse theorem. We will omit the almost identical details.

Let $\mc{Y}:=\{(y,v) \in Y \times \R^k \, | \, v \in T_y(Y)^\perp\}$ be the embedded normal bundle of $Y$ in $\R^k$, let $\eta:\R^n \times S^1 \times \R^k \lra \R^k \times \R^k$ be the regular map given by setting $\eta(x,p,v):=(g(x,p)+v,v)$, let $L:=\eta^{-1}(\mc{Y})$ and let $R:L \lra Y$ be the regular map sending $(x,p,v)$ into $g(x,p)+v$. Since $\eta$ is transverse to $\mc{Y}$ in $\R^{2k}=\R^k \times \R^k$, we have that $L$ is a nonsingular real algebraic subset of $\R^n \times S^1 \times \R^k$.

Denote by $V$ the open semialgebraic subset $g^{-1}(T)$ of $\R^n \times S^1$. Define the Nash maps $\alpha:V \lra Y$ and $\beta:V \lra \R^k$, and the open semialgebraic subset $\widehat{L}_0$ of $L$ by setting
\begin{align*}
&\alpha(x,p):=\varrho(g(x,p)),\\
&\beta(x,p):=\alpha(x,p)-g(x,p),\\
&\widehat{L}_0:=L \cap (V \times \R^k)=\{(x,p,v) \in L \, | \, g(x,p) \in T\}.
\end{align*}

By construction, we see at once that $\widehat{L}_0$ is the graph of $\beta$. By \eqref{eq:tubular}, we infer that $\overline{U} \times S^1 \subset V$. In this way, if we define $L_0:=\widehat{L}_0 \cap (U \times S^1 \times \R^k)$, point $(\mr{i})$ is proved. Point $(\mr{ii})$ follows immediately from \eqref{eq:b} and point $(\mr{iii})$ from \eqref{eq:g} and \eqref{eq:b}, provided we define the Nash map $\widetilde{H}$ as the restriction of $H^*$ from $U \times S^1$ to $Y$.
\end{proof}

\begin{Rem} \label{rem:nonstandard}
The reader observes that Lemma \ref{lem:approx} is a quite nonstandard version of the workhorse theorem. In fact, $R \circ i_{L_0} \circ \varpi^{-1}$ approximates $\widetilde{H}$ on $U^* \times I_a$, but not on the whole $U^* \times S^1$ as in the classical version. Moreover, the maps $R \circ i_{L_0} \circ \varpi^{-1}$ and $\widetilde{H}$ coincide on $X \times \{b\}$ without the assumption that $X$ is nice.  
\end{Rem}

\begin{proof}[Proof of Theorem \ref{thm:a}: point $(\mr{i})$]
We organize the proof into two steps.

\textit{Step I.} Let $X \subset \R^n$ be a compact real algebraic set, let $Y \subset \R^k$ be a nonsingular real algebraic set, let $Z$ be a real algebraic subset of $Y$ and let $f:X \lra Y$ be a Nash map transverse to $Z$ and $\mscr{C}^0$-homotopic to a regular map $g$. By doubling such a homotopy, we obtain a continuous map $h:X \times S^1 \lra Y$ such that $h(x,a)=f(x)$ and $h(x,b)=g(x)$ for every $x \in X$, where $a:=(1,0)$ and $b:=(-1,0)$ are points of $S^1 \subset \R^2$. Since the restriction of $h$ to $X \times \{a,b\}$ is Nash, there exists a Nash map $H:X \times S^1 \lra Y$ (arbitrarily $\mscr{C}^0$-close to $h$ and) equal to $h$ on $X \times \{a,b\}$. The existence of such a Nash map $H$ can be proved by means of an argument similar to the one used in Step I of the proof of Lemma \ref{lem:approx}. For the sake of completeness, we sketch such an argument below.

Let $i_Y:Y \hookrightarrow \R^k$ be the inclusion map, let $\varrho:T \lra Y$ be a Nash tubular neighborhood of $Y$ in $\R^k$ and let $I$ be a small open semialgebraic neighborhood of $\{a,b\}$ in $S^1$. By Efroymson's extension theorem, there exists a Nash map $h^*:X \times S^1 \lra \R^k$, which coincides with $i_Y \circ h$ on $X \times \{a,b\}$. If $I$ is sufficiently small around $\{a,b\}$ in $S^1$, then $h^*$ is arbitrarily $\mscr{C}^0$-close to $i_Y \circ h$ on $X \times I$. By the Weierstrass approximation theorem, we can find a polynomial map $h^{**}:X \times S^1 \lra \R^k$ arbitrarily $\mscr{C}^0$-close to $i_Y \circ h$ on the whole $X \times S^1$. Choose a Nash function $\xi:S^1 \lra \R$ such that $\xi(a)=\xi(b)=0$, $|\xi|<2$ on $S^1$ and $\xi$ is arbitrarily $\mscr{C}^0$-close to $1$ on $S^1 \setminus I$. It follows that the Nash map $H^*:X \times S^1 \lra \R^k$, given by $H^*:=\xi \cdot  h^{**}+(1-\xi) \cdot h^*$, is arbitrarily $\mscr{C}^0$-close to $i_Y \circ h$ on $X \times S^1$ and it is equal to $i_Y \circ h$ on $X \times \{a,b\}$. In particular, the image of $H^*$ is contained in $T$. Now, it suffices to set $H(x,t):=\varrho(H^*(x,p))$.

\textit{Step II.} Let us apply Lemma \ref{lem:approx} to $H$. Let $I_a$, $U$, $U^*$, $\widetilde{H}:U \times S^1 \lra Y$, $L$, $L_0$, 
$R:L \lra Y$ and $\pi:\R^n \times S^1 \times \R^k \lra \R^n \times S^1$ be as in the statement of such a lemma. Denote by $\varpi:L_0 \lra U \times S^1$ the Nash isomorphism obtained by restricting $\pi$. Define the Nash maps $r:U \times S^1 \lra Y$ and $r_a,\widetilde{H}_a:U \lra Y$ by setting $r(x,p):=R(\varpi^{-1}(x,p))$, $r_a(x):=r(x,a)$ and $\widetilde{H}_a(x):=\widetilde{H}(x,a)$. Observe that $\widetilde{H}_a$ is a Nash extension of $f$ on $U$. By point $(\mr{ii})$ of Lemma \ref{lem:approx}, we have that $r(x,b)=g(x)$ for every $x \in X$.

Equip $\R^{n+2}$ with the coordinates $(x_1,\ldots,x_{n+2})$ and indicate by $J$ the coordinate hyperplane $x_{n+2}=0$ of $\R^{n+2}$. Consider the compact Nash set $S_*:=r^{-1}(Z) \cap (X \times S^1)$. We have that $J \cap S_*$ is the disjoint union of $(r_a)^{-1}(Z) \times \{a\}$ and $g^{-1}(Z) \times \{b\}$. By using point $(\mr{iii})$ of the same lemma, we know that $r$ is arbitrarily $\mscr{C}^1$-close to $\widetilde{H}$ on $U^* \times I_a$. In particular, $r_a$ is arbitrarily $\mscr{C}^1$-close to $\widetilde{H}_a$ on $U^*$. Since $f$ is transverse to $Z$, it follows that $J$ is transverse to $S_*$ in $\R^{n+2}$ locally at $(r_a)^{-1}(Z) \times \{a\}$. Furthermore, Corollary \ref{cor:thom} implies that $(r_a)^{-1}(Z)$ is semialgebraically homeomorphic to $f^{-1}(Z)$.

Define the real algebraic set $S':=R^{-1}(Z)$ and the compact Nash set $S:=L_0 \cap S'$. Since $S$ is equal to the compact Nash set $\varpi^{-1}(S_*)$, we infer that $S$ is a union of connected components of $S'$ (and hence of $\mr{Zcl}_{\R^{n+2+k}}(S)$), and $(J \times \R^k) \cap S$ is the disjoint union of the real algebraic set $g^{-1}(Z) \times \{b\} \times \{0\}$ and of the compact Nash set $N:=\varpi^{-1}((r_a)^{-1}(Z) \times \{a\})$, which is semialgebraically homeomorphic to $f^{-1}(Z)$. Finally, $J \times \R^k$ is transverse to $S$ in $\R^{n+2+k}$ locally at $N$. We have just proved that $f^{-1}(Z)$ is strongly asym-Nash cobordant to a real algebraic set. Theorem \ref{thm:main} completes the proof.
\end{proof}

\begin{Rem} \label{rem:2.6i}
In the preceding proof, we cannot conclude that $J \times \R^k$ is tran\-sverse to $S$ in $\R^{n+2+k}$ locally at $g^{-1}(Z) \times \{b\} \times \{0\}$. This is due to the fact that we do not require that $g$ is transverse to $Z$. In this sense, we can say that the asymmetry of Definition \ref{def:nash-cobordant} is necessary.
\end{Rem}

\begin{Rem} \label{rem:2.6i-bis}
If $f$ is null homotopic (and hence $\mscr{C}^0$-homotopic to a regular map) and $\dim(Z)<\dim(Y)$, then the argument used in the preceding proof ensures that $f^{-1}(Z)$ is a Nash boundary.
\end{Rem}

Let us prove point $(\mr{ii})$.

\begin{proof}[Proof of Theorem \ref{thm:a}: point $(\mr{ii})$]
As in the preceding proof, we will show that $f^{-1}(Z)$ is strongly asym-Nash cobordant to a real algebraic set. By Theorem \ref{thm:main}, the proof will be complete.

Let $F:V \lra Y$ be a Nash extension of $f$. Consider the Nash map $F \times \mr{id}_V:V \lra Y \times V$, where $\mr{id}_V:V \lra V$ indi\-cates the identity map on $V$. By hypothesis, $Y$ and $V$ have totally algebraic homology. In this way, the K\"unneth formula implies that $Y \times V$ has totally alge\-braic homology as well. Thanks to Lemma 2.7.1 of \cite{AK}, we have that $F \times \mr{id}_V$ is unoriented bordant to a regular map. Equivalently, there exist a compact Nash submanifold $B$ of some $\R^{n+1}$ containing $V$, a nonsingular real algebraic subset $V'$ of $\R^n$ contained in $B \setminus V$ and a Nash map $H:B \lra Y \times V$ such that $\R^n \cap B=V \cup V'$, $\R^n$ is transverse to $B$ in $\R^{n+1}$, the restriction of $H$ to $V$ is equal to $F \times \mr{id}_V$ and the restriction of $H$ to $V'$ is a regular map. Now, by applying Theorem 2.8.3 of \cite[p. 64]{AK} to $H$, we obtain a positive integer $k$, a nonsingular real algebraic subset $L$ of $\R^{n+1+k}=\R^{n+1} \times \R^k$, a union $L_0$ of connected components of $L$ and regular maps $P:L \lra Y$ and $Q:L \lra V$ with the following properties:
\begin{itemize}
 \item[$(a)$] If $\pi:\R^{n+1} \times \R^k \lra \R^{n+1}$ denotes the natural projection, then $\pi(L_0)=B$ and the restriction $\varpi:L_0 \lra B$ of $\pi$ from $L_0$ to $B$ is a Nash isomorphism.
 \item[$(b)$] $V' \times \{0\} \subset L_0$.
 \item[$(c)$] $(P \times Q) \circ i_{L_0} \circ \varpi^{-1}$ is arbitrarily $\mscr{C}^1$-close to $H$ on $B$ and $(P \times Q) \circ i_{L_0} \circ \varpi^{-1}=H$ on $V'$, where $i_{L_0}:L_0 \hookrightarrow L$ denotes the inclusion map.
\end{itemize}

Let $p:B \lra Y$ and $q:B \lra V$ be the Nash maps sending $x$ into $p(x):=(P \circ i_{L_0} \circ \varpi^{-1})(x)$ and $q(x):=(Q \circ i_{L_0} \circ \varpi^{-1})(x)$, respectively. By points $(b)$ and $(c)$, $p \times q$ coincides with $H$ on $V'$. In particular, $p$ and $q$ are regular maps on $V'$. By using point $(c)$ again, we infer that:
\begin{itemize}
 \item[$(c')$] $p$ is arbitrarily $\mscr{C}^1$-close to $F$ on $V$.
 \item[$(c'')$] $q$ is arbitrarily $\mscr{C}^1$-close to $\mr{id}_V$ on $V$. In particular, it is a Nash submersion locally at $V$ in $B$ and its restriction $q':V \lra V$ to $V$ is a Nash automorphism of $V$ arbitrarily $\mscr{C}^1$-close to $\mr{id}_V$.
\end{itemize}

Let $X_*$ and $\widetilde{X}$ be the compact Nash sets and let $X'$ be the real algebraic set defined by setting $X_*:=q^{-1}(X)$, $\widetilde{X}:=X_* \cap V$ and $X':=X_* \cap V'$. Observe that $\R^n \cap X_*$ is equal to the disjoint union of $\widetilde{X}$ and of $X'$. Moreover, by the second part of point $(c'')$, we have that $\R^n$ is transverse to $X_*$ in $\R^{n+1}$ locally at $\widetilde{X}$, and $\widetilde{X}=(q')^{-1}(X)$ is Nash isomorphic to $X$.

Consider the compact Nash set $S_*:=p^{-1}(Z) \cap X_*$. We have that $\R^n \cap S_*$ is the disjoint union of the compact Nash set $\widetilde{X}_*:=p^{-1}(Z) \cap \widetilde{X}$ and of the real algebraic set $X'_*:=p^{-1}(Z) \cap X'$. By combining points $(c')$ and $(c'')$ with the fact that $f$ is transverse to $Z$, we obtain at once that $\R^n$ is transverse to $S_*$ in $\R^{n+1}$ locally at $\widetilde{X}_*$. Furthermore, Corollary \ref{cor:thom} ensures that $\widetilde{X}_*$ is semialgebraically homeomorphic to $(F \circ q)^{-1}(Z) \cap \widetilde{X}=(q')^{-1}(f^{-1}(Z))$ (and hence to $f^{-1}(Z)$).

Define the real algebraic set $S':=P^{-1}(Z) \cap Q^{-1}(X)$ and the compact Nash set $S:=L_0 \cap S'$. Denote by $O$ the coordinate hyperplane $x_{n+1}=0$ of $\R^{n+1+k}=\R^{n+1} \times \R^k$. Since $S$ is equal to $\varpi^{-1}(S_*)$, we infer that $S$ is a union of connected components of $S'$ (and hence of $\mr{Zcl}_{\R^{n+1+k}}(S)$), and $O \cap S$ is the disjoint union of the real algebraic set $X'_* \times \{0\}$ and of the compact Nash set $N:=\varpi^{-1}(\widetilde{X}_*)$, which is semialgebraically homeomorphic to $f^{-1}(Z)$. Finally, $O$ is transverse to $S$ in $\R^{n+1+k}$ locally at $N$. This proves that $f^{-1}(Z)$ is strongly asym-Nash cobordant to a real algebraic set, as desired.
\end{proof}

\begin{Rem} \label{rem:2.6ii}
In the preceding proof of point $(\mr{ii})$, similarly to the one of point $(\mr{i})$, we do not know if $O$ is transverse to $S$ in $\R^{n+1+k}$ locally at $X'_* \times \{0\}$.
\end{Rem}

It is worth noting that the argument used in the latter proof implies a new algebraic approximation result. The result is as follows.

\begin{Thm} \label{thm:approx-new}
Let $V$ and $Y$ be nonsingular real algebraic sets with totally algebraic homology and let $F:V \lra Y$ be a Nash map. Suppose that $V$ is compact. Then there exist a compact nonsingular real algebraic set $\widetilde{V}$, a regular map $\widetilde{F}:\widetilde{V} \lra Y$ and a Nash isomorphism $\Pi:\widetilde{V} \lra V$ such that $\Pi$ is a regular map and $\widetilde{F}$ is arbitrarily $\mscr{C}^{\infty}$-close to $F \circ \Pi$.
\end{Thm}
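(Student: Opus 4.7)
The plan is to retrace the construction carried out in the proof of Theorem \ref{thm:a}(ii), from which essentially all the ingredients needed for Theorem \ref{thm:approx-new} are already produced. Consider the Nash map $F \times \mr{id}_V : V \lra Y \times V$. Since $Y$ and $V$ have totally algebraic homology, so does $Y \times V$ by the K\"unneth formula, and Lemma 2.7.1 of \cite{AK} supplies a compact Nash submanifold $B \subset \R^{n+1}$ containing $V$, a nonsingular real algebraic set $V' \subset \R^n$ with $\R^n \cap B = V \cup V'$ and $\R^n$ transverse to $B$, together with a Nash map $H : B \lra Y \times V$ which equals $F \times \mr{id}_V$ on $V$ and is regular on $V'$.

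Applying the workhorse theorem (Theorem 2.8.3 of \cite{AK}) to $H$ then produces a nonsingular real algebraic set $L \subset \R^{n+1} \times \R^k$, a union $L_0$ of connected components of $L$, regular maps $P : L \lra Y$ and $Q : L \lra V$, and a Nash isomorphism $\varpi : L_0 \lra B$ obtained as the restriction of the projection $\pi : \R^{n+1+k} \lra \R^{n+1}$, such that $(P \times Q) \circ \varpi^{-1}$ is arbitrarily $\mscr{C}^\infty$-close to $H$ on $B$ and equals $H$ on $V'$.

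I would then set $\widetilde{V} := \varpi^{-1}(V) = L_0 \cap (V \times \R^k)$, $\Pi := \varpi|_{\widetilde{V}} : \widetilde{V} \lra V$ and $\widetilde{F} := P|_{\widetilde{V}} : \widetilde{V} \lra Y$. The map $\Pi$ is a Nash isomorphism as the restriction of $\varpi$, and it is moreover regular since it is the restriction of the linear projection $\pi$; likewise $\widetilde{F}$ is regular as the restriction of $P$. Since the first component of $H$ on $V$ is $F$, the map $P \circ \varpi^{-1}$ is $\mscr{C}^\infty$-close to $F$ on $V$; composing with $\Pi$ gives $\widetilde{F}$ arbitrarily $\mscr{C}^\infty$-close to $F \circ \Pi$. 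Compactness of $\widetilde{V}$ is clear because $\widetilde{V}$ is Nash isomorphic via $\Pi$ to the compact set $V$.

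The main obstacle is to verify that $\widetilde{V}$ is an honest nonsingular real algebraic set and not merely a Nash submanifold of $L$. The algebraic set $\widehat{V} := L \cap (V \times \R^k)$ contains $\widetilde{V}$, and the transversality between $\R^n$ and $B$ in $\R^{n+1}$ translates via $\varpi$ into transversality between $L$ and $V \times \R^k$ in $\R^{n+1+k}$ locally at $\widetilde{V}$, so $\widehat{V}$ is nonsingular in a neighborhood of $\widetilde{V}$; the issue is that a priori $\widetilde{V}$ is only a union of connected components of $\widehat{V}$. I would address this either by a refinement of the workhorse construction that ensures the extraneous components of $\widehat{V}$ are already absent, or by replacing $L$ with an algebraic model in the spirit of Lemma \ref{lem:projectively-closed} so that the relevant union of components becomes Zariski closed. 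This is the step where I expect to spend the most care.
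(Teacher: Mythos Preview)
Your setup matches the paper's exactly, and you correctly pinpoint the obstacle: $\varpi^{-1}(V)=L_0\cap(V\times\R^k)$ is only a union of connected components of the algebraic set $L\cap(V\times\R^k)$, not a priori Zariski closed. However, your two suggested repairs are not the mechanism the paper uses, and neither is easy to carry out. A Wallace-type separation of components (in the spirit of Lemma~\ref{lem:projectively-closed}) would re-embed $L$ biregularly, but there is no reason the image of $L_0\cap(V\times\R^k)$ becomes Zariski closed under such an embedding; that lemma makes a given \emph{algebraic} set projectively closed, it does not promote a union of components to an algebraic set. And ``refining the workhorse construction'' so that $L\setminus L_0$ misses $V\times\R^k$ is not something the standard proof of Theorem~2.8.3 of \cite{AK} provides.

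The paper's fix is different and worth knowing: it does \emph{not} take $\widetilde V=\varpi^{-1}(V)$. Instead it observes that the Nash hypersurface $\varpi^{-1}(V)$ of $L_0$ is homologous in $L_0$ to the genuine algebraic set $V'\times\{0\}$ (they cobound one half of the cobordism $L_0$). Theorem~2.8.2 of \cite{AK} then produces a nonsingular real algebraic hypersurface $\widetilde V\subset L$ contained in $L_0$, together with a Nash automorphism of $L_0$ arbitrarily $\mscr C^\infty$-close to the identity carrying $\widetilde V$ onto $\varpi^{-1}(V)$. Because $\widetilde V$ is no longer literally $\varpi^{-1}(V)$, the paper sets $\Pi:=Q|_{\widetilde V}$ (not the projection $\varpi$) and $\widetilde F:=P|_{\widetilde V}$; regularity of $\Pi$ and $\widetilde F$ is then immediate, $\Pi$ is a Nash isomorphism since it is $\mscr C^\infty$-close to a Nash isomorphism between compact manifolds, and the approximation $\widetilde F\approx F\circ\Pi$ follows from $P\approx F\circ\varpi'$ and $Q\approx\varpi'$ on $\varpi^{-1}(V)$. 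The missing idea in your proposal is precisely this homological replacement step.
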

\begin{proof}
Let $V'$, $B$, $H$, $L$, $L_0$, $P$, $Q$ and $\varpi$ be as in the preceding proof. Observe that the nonsingular Nash hypersurface $\varpi^{-1}(V)$ of $L_0$ is homologous to real algebraic set $V' \times \{0\}$ in $L_0$. In this way, by Theorem 2.8.2 of \cite{AK}, there exist a nonsingular real algebraic hypersurface $\widetilde{V}$ of $L$ contained in $L_0$ and a smooth (or better Nash) automorphism of $L_0$ arbitrarily $\mscr{C}^\infty$-close to $\mr{id}_{L_0}$ sending $\widetilde{V}$ into $\varpi^{-1}(V)$. Let $\varpi':\varpi^{-1}(V) \lra V$ be the restriction of $\varpi$ from $\varpi^{-1}(V)$ to $V$, and let $\widetilde{F}:\widetilde{V} \lra Y$ and $\Pi:\widetilde{V} \lra V$ be the restrictions of $P$ and of $Q$ to $\widetilde{V}$, respectively. Since $P$ is arbitrarily $\mscr{C}^\infty$-close to $F \circ \varpi'$ on $\varpi^{-1}(V)$ and $Q$ is arbitrarily $\mscr{C}^\infty$-close to $\varpi'$ on $\varpi^{-1}(V)$, it follows that $\widetilde{F}$ is arbitrarily $\mscr{C}^\infty$-close to $F \circ \Pi$ as well.
\end{proof}

The reader compares the statement and the proof of the preceding theorem with the statement and the proof of Proposition 2.8.8 of \cite{AK}, in which Akbulut and King prove a similar result for smooth maps homotopic to regular maps between nonsingular real algebraic sets, whose homology is not necessarily totally algebraic.

\begin{Rem} \label{rem:2.6ii-bis}
We point out that, starting from Theorem \ref{thm:approx-new}, one can prove point $(\mr{ii})$ of Theorem \ref{thm:a} by a direct application of Corollary \ref{cor:thom}.
\end{Rem}

\subsection{Proof of Theorem \ref{thm:b}}

\textit{Point $(\mr{i})$}. Let $A$ be a Nash boundary and let $B \subset \R^n$ be a compact Nash set such that $M=A \times B$. Replacing $A$ with one of its semialgebraically homeomorphic copy if necessary, we can assume that there exist a compact Nash subset $S$ of some $\R^{m+1}$ with $\R^m \cap S=A$ and a semialgebraic Whitney stratification $\{U_i\}_i$ of an open semialgebraic neighborhood $U$ of $A$ in $S$ such that $\R^m$ is transverse to each $U_i$ in $\R^{m+1}$.

Choose a semialgebraic Whitney stratification $\{B_j\}_j$ of $B$ (see \cite[Section 9.7]{BCR} for the existence of such a stratification). Define the compact Nash subset $S^*:=S \times B$ of $\R^{m+1+n}=\R^{m+1} \times \R^n$ and the semialgebraic Whitney stratification $\{U_i \times B_j\}_{i,j}$ of the open semialgebraic neighborhood $U \times B$ of $M$ in $S^*$. Denote by $J$ the coordinate hyperplane $x_{m+1}=0$ of $\R^{m+1+n}$. It is immediate to verify that $J \cap S^*=M$ and $J$ is transverse to each stratum $U_i \times B_j$ in $\R^{m+1+n}$. This proves that $M$ is a Nash boundary.

\textit{Point $(\mr{ii})$.} Let $P$ and $Q$ be proper Nash subsets of some $\R^m$ such that $Q$ is compact and $M=P \cap Q$. Since $P$ is a proper subset of $\R^m$, we may suppose that $0 \not\in P$. Assume that $P$ is transverse to $Q$ in $\R^m$; namely, that there exist a semialgebraic Whitney stratification $\{V_k\}_k$ of an open semialgebraic neighborhood $V$ of $M$ in $P$ and a semialgebraic Whitney stratification $\{W_h\}_h$ of an open semialgebraic neighborhood $W$ of $M$ in $Q$ such that each stratum $V_k$ is transverse to each stratum $W_h$ in $\R^m$.

Choose two positive real numbers $r$ and $r'$ in such a way that $r'>r$, $\bar{B}_m(r) \cap P=\emptyset$ and $Q \subset \bar{B}_m(r')$. Such real numbers exist, because $P$ is closed in $\R^m$, $0 \not\in P$ and $Q$ is compact. Equip $\R^{m+1}=\R^m \times \R$ with the coordinates $(x,x_{m+1})=(x_1,\ldots,x_m,x_{m+1})$, and consider the polynomial map $H:\R^{m+1} \lra \R^m$ and the Nash subset $S$ of $\R^{m+1}$ defined by setting
\[
\textstyle
H(x,x_{m+1}):=\left(\frac{r'-r}{r} \, x_{m+1}^2 +1 \right)x
\quad \mbox{and} \quad
S:=(P \times \R) \cap H^{-1}(Q).
\]
By the choice of $r$ and $r'$, it is immediate to verify that $S$ is contained in $\bar{B}_m(r') \times [-1,1]$, and hence it is compact. Moreover, we have that $\R^m \cap S=M$. Observe that $\{V_k \times \R\}_k$ and $\{H^{-1}(W_h)\}_h$ are semialgebraic Whitney stratifications of $V \times \R$ and of $H^{-1}(W)$, respectively. The transversality between the strata $V_k$ and the strata $W_h$ in $\R^m$ implies the transversality between the strata $V_k \times \R$ and the strata $H^{-1}(W_h)$ in $\R^{m+1}$ locally at $(V_k \cap W_h) \times \{0\}$. Furthermore, $\R^m$ is transverse to each intersection $(V_k \times \R) \cap H^{-1}(W_h)$ in $\R^{m+1}$ locally at $(V_k \cap W_h) \times \{0\}$. It follows that, for a sufficiently small positive real number $\varepsilon$, the family $\{(V_k \times (-\varepsilon,\varepsilon)) \cap H^{-1}(W_h)\}_{k,h}$ is a semialgebraic Whitney stratification of the open semialgebraic neighborhood $(\R^m \times (-\varepsilon,\varepsilon)) \cap S$ of $M$ in $S$, and $\R^m$ is transverse to each element of such a stratification in $\R^{m+1}$. This proves that $M$ is a Nash boundary.



\vspace{1.5em}

\begin{flushleft}
\textsc{Riccardo Ghiloni}\\ 
Department of Mathematics,
 University of Trento\\
Via Sommarive 14, 38123 Povo-Trento, Italy\\
e-mail: \texttt{ghiloni@science.unitn.it}\\
\vspace{1em}
\textsc{Alessandro Tancredi}\\
Department of Mathematics and Informatics,
 University of Perugia\\
Via Vanvitelli 1, 06123 Perugia, Italy \\
e-mail: \texttt{alessandro.tancredi@unipg.it}
\end{flushleft}


\end{document}